\numberwithin{equation}{section}
\DeclareMathOperator*{\esssup}{ess\,sup}
\newtheorem{theorem}{Theorem}[section]
\newtheorem{proposition}[theorem]{Proposition}
\newtheorem{lemma}[theorem]{Lemma}
\newtheorem{corollary}[theorem]{Corollary}
\newtheorem{remark}[theorem]{Remark}
\begin{document}
\title[Lipschitz stability and reconstruction in inverse problem]{Lipschitz stability and reconstruction in inverse problems for semi-discrete parabolic operators}
\author[R. Lecaros]{Rodrigo Lecaros}
\address[R. Lecaros]{Departamento de Matem\'atica, Universidad T\'ecnica Federico Santa Mar\'ia,  Santiago, Chile.}
\email{rodrigo.lecaros@usm.cl}

\author[J. L\'opez-R\'ios]{Juan L\'opez-R\'ios}
\address[J. L\'opez-R\'ios]{Universidad Industrial de Santander, Escuela de Matemáticas, A.A. 678, Bucaramanga, Colombia}
\email{jclopezr@uis.edu.co}

\author[A. A. P\'erez]{Ariel A. P\'erez}
\address[A. A. P\'erez]{(Corresponding Author) Departamento de Matem\'atica, Universidad del B\'io-B\'io, Concepci\'on, Chile.}
\email{aaperez@ubiobio.cl}

\subjclass[2020]{35R30, 35R20, 35K20, 35K10}
\keywords{Inverse problem, Stability, Global Carleman estimate }

\begin{abstract}
    This work addresses an inverse problem for a semi-discrete parabolic equation, consisting of identifying the right-hand side of the equation from solution measurements at an intermediate time and within a spatial subdomain. We apply this result to establish a stability estimate for a coefficient inverse problem involving the recovery of a spatially dependent potential function. Furthermore, we present a reconstruction algorithm for recovering this coefficient and provide a proof of its convergence. Our approach relies on a novel semi-discrete Carleman estimate in which the parameter is constrained by the mesh size. Due to the discrete terms arising in the Carleman inequality, this method naturally introduces an error term associated with the solution's initial condition. 

\end{abstract}
\maketitle

\section{Introduction}
  Let $d\geq 1$, $T>0$, and $\displaystyle\Omega:= \prod_{i=1}^{d}(0,1)\subset \mathbb{R}^d$, with $\omega\Subset \Omega$ denoting an arbitrary subdomain. We consider the following parabolic system
\begin{equation}\label{system:continuous}
\begin{cases}
\partial_{t} y(t,x) - \mathcal{A}y(t,x)= g(t,x), & (t,x) \in (0,T)\times\Omega,\\
y(t,x) = 0, & (t,x) \in (0,T)\times\partial\Omega,\\
y(0,x) = y_{ini}(x), & x \in \Omega,
\end{cases}
\end{equation}
where $\mathcal{A}$ is a uniformly elliptic second-order  operator defined by
\begin{equation}\label{ope:A:continuous}
\mathcal{A}y(t,x)=\sum_{i=1}^{d}\frac{\partial}{\partial x_i}\left(\gamma_{i}(t,x)\frac{\partial y}{\partial x_{i}}(t,x)\right)-\sum_{i=1}^{d}b_{i}(t,x)\frac{\partial y}{\partial x_{i}}(t,x)-c(t,x)y(t,x).
\end{equation} 
Here, $\gamma_{i}(t,x)>0$ for all $(t,x)\in(0,T)\times\Omega$, and $g\in H^1((0,T),L^2(\Omega))$.

In this framework, a classical inverse problem consists of determining the source term $g(t,x)$ from observations of $y$ within the subdomain $\omega$. Specifically, for a fixed time $\vartheta\in(0,T)$, we consider the observation operator $\Lambda_{\vartheta}:H^1((0,T),L^2(\Omega))\to H^2(\Omega)\times H^1((0,T),L^2(\omega)),$ given by
$$\Lambda_{\vartheta}(g):= (y|_{t=\vartheta},y|_{\omega\times(0,T)}),$$
where $y$ denotes the solution of \eqref{system:continuous}. The stability of the inverse problem corresponds to the Lipschitz inequality
\begin{equation}\label{stability:continuous}
    \|g\|_{H^1((0,T),L^2(\Omega))}\leq C\|\Lambda_{\vartheta}(g)\|:= C\left(\|y|_{t=\vartheta}\|_{H^2(\Omega)}+\|y\|_{H^1((0,T),L^2(\omega))}\right),
\end{equation}
for some constant $C>0$.

Several works have addressed this inverse problem in the literature; see, for instance, \cite{IY-2024,IY-2024-II,IY-1998,Yamamoto_2001}. As noted in \cite{IY-2024}, most results in this area are obtained when the observation time $\vartheta$ is lies in $(0,T)$, following the method introduced by Bukhgeim and Klibanov \cite{bukhgeim1981global,MetodoB-K1981,Klibanov1984}. In \cite{IY-1998}, the authors applied this method to prove uniqueness and Lipschitz stability of the inverse problem, while in \cite{IY-2024-II}, they established conditional Lipschitz stability and uniqueness for the case $\vartheta=T$.

\par In contrast, the (semi)discrete setting has been explored primarily in the context of controllability problems for parabolic operators; see \cite{BLR-2014,CLNP2022,N:2014} for the spatial semi-discrete setting, \cite{BHS2020} for the time semi-discrete case, and \cite{GCHS2021,LPP:2024} for the fully discrete setting. Recently, the time semi-discrete setting for an inverse problem was studied in \cite{Klibanov2024_tiempo_discreto}. In this regard, the authors did not discuss extending the analysis to the spatial semi-discrete framework for parabolic operators. Moreover, the only results on inverse problems in the spatial semi-discrete setting are \cite{BEO-2015,ZZ-2022} and \cite{ZZ-2023}, concerning the wave and Schrödinger equations, respectively. Hence, our objective is to fill this gap by studying an inverse source problem for a spatial semi-discretization of the system \eqref{system:continuous}, establishing its stability (see Theorem \ref{theo:stability}), and providing a reconstruction algorithm to recover a zeroth-order spatially dependent coefficient (see Algorithm \ref{Alporithmo_Reconstruccion}).

Let us introduce some notation related to the spatial semi-discrete framework.  Given $N\in\mathbb{N}$, let $h=\frac{1}{N+1}$ be small enough to represent the size of the mesh. We define the Cartesian grid of $[0,1]^{d}$ as
\begin{equation}\label{mesh}
    \mathcal{K}_{h}:= \left\{x\in [0,1]^{d}\mid \exists k\in\mathbb{Z}^{d} \text{ such that }x=hk  \right\}.
\end{equation} 
We set the mesh $\mathcal{W}:= \Omega\cap \mathcal{K}_{h}$ and denote by $C(\mathcal{W})$ the set of functions defined on $\mathcal{W}$. 
Moreover, we define the average and difference operators as 
\begin{equation}\label{op:difference:average}
\begin{split}
    A_{i}u(x)&:= \frac{1}{2}\left(\tau_{+}i u(x)+\tau_{-i}u(x)\right),\\
    D_{i}u(x)&:= \frac{1}{h}\left(\tau_{+i}u(x)-\tau_{-i}u(x) \right),
    \end{split}
\end{equation}
where $\tau_{\pm i}y(x):= y(x\pm \frac h2 e_{i})$, being $\{e_{i}\}_{i=1}^{d}$ the canonical basis of $\mathbb{R}^{d}$. Thus, by denoting $Q:= (0,T)\times\mathcal{W}$, the spatial semi-discrete approximation of the system \eqref{system:continuous} is given by
\begin{equation}\label{system:discrete}
\begin{cases}
    \partial_{t}y(t,x)-\mathcal{A}_{h}y(t,x)=g(t,x),\quad &(t,x)\in Q,\\
    y(t,x)=0,  &(t,x)\in(0,T)\times\partial\mathcal{W},\\
    y(0,x)=y_{ini}(x), &x\in\mathcal{W},
\end{cases}
\end{equation}
with $\mathcal{A}_{h}$ being the finite difference space approximation of the continuous operator \eqref{ope:A:continuous}, given by
\begin{equation}
\mathcal{A}_{h}y(t,x):= \sum_{i=1}^{d}D_{i}\left(\gamma_{i}(t,x)D_{i}y(t,x)\right)-\sum_{i=1}^{d}b_i(t,x)D_{i}A_{i}y(t,x)-c(t,x)y(t,x).
\end{equation}
Our inverse problem consists of determining the right-hand side of the system \eqref{system:discrete}, known as an inverse source problem, from the knowledge of the data $\left(y\Bigr|_{t=\vartheta},\Bigl.y\Bigr|_{(0,T)\times \omega}\right)$, where $\omega\subset\mathcal{W}$ is an arbitrary subdomain. That is, we investigate the semi-discrete analogue of \eqref{stability:continuous}. 

 Assume that $\gamma_i,b_i\in C^1([0,T]\times \overline{\Omega})$, for all $i=1,\ldots,d$, that $c\in C^1([0,T]\times \overline{\Omega})$ and that $y_{ini}\in C^0(\overline{\Omega})$. Let
$$\Gamma(t,x):= \mathrm{Diag}(\gamma_{1}(t,x),\gamma_{2}(t,x),\ldots, \gamma_{d}(t,x)),$$ such that $\gamma_{i}(t,x)>0$ for all $i=1,\ldots,d$, and it holds
\begin{equation*}
    \mbox {reg}(\Gamma):= \esssup_{
    \begin{array}{c}
    (t,x)\in [0,T]\times\overline{\Omega}\\
    i=1,\ldots,d    
    \end{array}
    }\left( \gamma_{i}(t,x)+\frac{1}{\gamma_{i}(t,x)}+|\nabla_{x}\gamma_{i}(t,x)|+|\partial_t\gamma_{i}(t,x)|\right)<+\infty.
\end{equation*} 
Given $\mbox{reg}^{0}>0$, henceforth $\Gamma$ is such that $\mathrm{reg}(\Gamma)\leq\mbox{reg}^{0}$. 

Moreover, assume that, for some constant $C>0$, the function $g\in C^1([0,T];L^\infty(\overline{\Omega}))$ satisfies the estimate
    \begin{equation}\label{assump:g}
        |\partial_{t}g(t,x)|\le C|g(\vartheta,x)|, \quad \text{for almost all }(t,x)\in(0,T)\times\overline{\Omega}.
    \end{equation} 
Our first main result is the following stability estimate. The detailed notation is introduced in the next section. 
\begin{theorem}\label{theo:stability}
    	 Let $\psi$ satisfy \eqref{assumtion:psi} and let $\varphi$ be given by \eqref{funcion-peso-2}. Assume that $g$ satisfies \eqref{assump:g}, and let $y\in \mathcal{C}^{1}([0,T],\overline{\mathcal{W}})$ be the solution of the system \eqref{system:discrete}. Then there exist positive constants $C$, $C''$, $\tau_{0}\geq 1$, $h_{0}>0$, $\varepsilon >0$, depending on $\omega$, $\mathrm{reg}^{0}$ and $T$, such that for all $\tau\geq \tau_{0}(T+T^{2})$, $0<h\leq h_{0}$, there exists $0<\delta(h) \leq 1/2$ satisfying $\tau h(\delta T^{2})^{-1}\leq \varepsilon$, and the estimate
		\begin{equation*}
        \begin{aligned}
		    \|g\|_{L_h^{2}(Q)}\leq &Ce^{\frac{C''}{T^2}\tau}\left(\|y|_{t=\vartheta}\|_{H_h^2(\mathcal{W})}+\|e^{s\varphi}\partial_{t}y\|_{L_h^2(Q_\omega)}+\|e^{s\varphi}y\|_{L_h^2(Q_\omega)}\right)\\
            &+Ce^{-\frac{C''}{h}}\left(\|y|_{t=0}\|_{L^2_h(\mathcal{W})}+\|\partial_{t}y|_{t=0}\|_{L^2_h(\mathcal{W})}\right),
            \end{aligned}
		\end{equation*} 
		  holds for $Q_{\omega}:= (0,T)\times\omega$.
\end{theorem}

In the inequality of the above Theorem, there is an error term $$e^{-\frac{C''}{h}}\left(\|y|_{t=0}\|_{L^2_h(\mathcal{W})}+\|\partial_{t}y|_{t=0}\|_{L^2_h(\mathcal{W})}\right),$$ which arises from the discrete phenomenon and tends to zero as $h\to 0$. Moreover, if we assume $y(0)=\partial_ty(0)=0$, we recover the classical inequality for the continuous case as in \cite{IY-1998}.

The proof of Theorem \ref{theo:stability} is based on the new Carleman estimate \eqref{ine:carleman:semi-discreteNew} established for the operator in \eqref{system:discrete}. To our knowledge, the only known Carleman estimate available in the literature for semi-discrete parabolic operators in arbitrary dimensions is that of \cite{BLR-2014}. However, it is not suitable for studying the inverse problem because it lacks a  term involving the second-order spatial operator. In this work, we address this issue by establishing Carleman estimates for the solution of system \eqref{system:discrete} and \eqref{system:z}, corresponding to the cases $q=0$ and $2q=1$, respectively. These results are summarized below.

\begin{theorem}\label{theo:Carleman}
Let $\psi$ satisfy \eqref{assumtion:psi} and let $\varphi$ be given by \eqref{funcion-peso-2}. Let $y\in \mathcal{C}^{1}([0,T],\overline{\mathcal{W}})$ be the solution of system \eqref{system:discrete}. For $\lambda\geq 1$ sufficiently large, there exist constants $C$, $\tau_{0}\ge 1$, $h_{0}>0$, $\varepsilon >0$, depending on $\omega$, $\omega_{0}$, $\mathrm{reg}^{0}$, $T$, and $\lambda$, such that
    \begin{align}\label{ine:carleman:semi-discreteNew}
		I_{2q}(y)+J_{2q}(y) 
		\leq&  C\left(\int_Qe^{2\tau\theta\varphi}(\tau\theta)^{2q}|g|^2+\int_{(0,T)\times\omega}(\tau\theta)^{{2q}+3}e^{2\tau\theta\varphi}|y|^2\right)\\
		&+Ch^{-2}\int_{\mathcal{W}} (\tau\theta(0))^{2q}\left(\Big|y|_{t=0}\Big|^2+\Big|y|_{t=T}\Big|^2\right)e^{2\tau\theta(0)\varphi},\notag
	\end{align}
where 
\begin{align*}
     I_{{2q}}(y):= \int_Q(\tau\theta)^{2q-1}&|\partial_{t}y|^2e^{2\tau\theta\varphi}+\sum_{i,j\in\llbracket 1,d\rrbracket}\int_{Q^\ast_{ij}}(\tau\theta)^{2q-1}\gamma_{i}\gamma_{j}e^{2\tau\theta\varphi}|D_{ij}y|^{2},
 \end{align*}
 and
\begin{align*}
J_{2q}(y):= &\tau^{2q+1}\sum_{i\in\llbracket 1,d\rrbracket}\left( \left\|\theta^{1/2+q} e^{\tau\theta\varphi} D_{i}y\right\|^{2}_{L^{2}_{h}(Q^{\ast}_i)}+\left\|\theta^{1/2+q} e^{\tau\theta\varphi}A_{i}D_{i}y\right\|^{2}_{L^{2}_{h}(Q)}\right)\\
&+\tau^{3+2q}\left\|\theta^{3/2+q}e^{\tau\theta\varphi}y\right\|^{2}_{L^{2}_{h}(Q)},
\end{align*} 
for any $q\in\mathbb{R}$, $\tau\geq \tau_{0}(T+T^{2})$, $0<h\leq h_{0}$, $0<\delta \leq 1/2$, $\tau h(\delta T^{2})^{-1}\leq \varepsilon$.
\end{theorem}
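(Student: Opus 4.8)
The plan is to follow the classical conjugation scheme for parabolic Carleman estimates, adapted to the spatial finite-difference calculus in the spirit of \cite{BLR-2014}, while carrying the power $p$ symbolically and tracking every mesh-dependent remainder. First I would conjugate: setting $v:=e^{\tau\theta\varphi}y$ and $P_{\tau}v:=e^{\tau\theta\varphi}(\partial_{t}-\mathcal{A}_{h})(e^{-\tau\theta\varphi}v)$, so that $P_{\tau}v=e^{\tau\theta\varphi}g$, and then decomposing $P_{\tau}v=\mathcal{L}_{1}v+\mathcal{L}_{2}v+\mathcal{R}v$. Here $\mathcal{L}_{1}$ is (formally) self-adjoint and collects the self-adjoint second-order piece $\sum_{i}D_{i}(\gamma_{i}D_{i}v)$ together with the leading zeroth-order term of size $(\tau\theta)^{2}\sum_{i}\gamma_{i}|A_{i}D_{i}\varphi|^{2}v$; $\mathcal{L}_{2}$ is skew-adjoint and collects $\partial_{t}v$ and the first-order transport term $\sim \tau\theta\sum_{i}\gamma_{i}(A_{i}D_{i}\varphi)\,A_{i}D_{i}v$; and $\mathcal{R}v$ gathers the lower-order and genuinely discrete corrections. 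Squaring in $L^{2}_{h}(Q)$ gives $\|\mathcal{L}_{1}v\|^{2}+\|\mathcal{L}_{2}v\|^{2}+2\langle\mathcal{L}_{1}v,\mathcal{L}_{2}v\rangle_{L^{2}_{h}(Q)}=\|P_{\tau}v-\mathcal{R}v\|^{2}$, and the entire estimate is driven by extracting positivity from the cross term.

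To expand $2\langle\mathcal{L}_{1}v,\mathcal{L}_{2}v\rangle$ I would integrate by parts in time (continuous) and sum by parts in space, using the discrete adjoints of $D_{i}$ and $A_{i}$ together with the discrete Leibniz identities $D_{i}(uw)=A_{i}u\,D_{i}w+D_{i}u\,A_{i}w$ and $A_{i}(uw)=A_{i}u\,A_{i}w+\tfrac{h^{2}}{4}D_{i}u\,D_{i}w$. As in the continuous theory, the dominant contributions are a zeroth-order term $\sim\tau^{3}\theta^{3}\lambda^{4}|\nabla\psi|^{4}\varphi^{3}|v|^{2}$ and a gradient term $\sim\tau\theta\lambda^{2}\varphi\sum_{i}|D_{i}v|^{2}$, whose positivity off the critical set of $\psi$ follows, for $\lambda$ large, from \eqref{assumtion:psi} and \eqref{funcion-peso-2}; the set where $\nabla\psi$ may vanish is contained in $\omega_{0}$, which is why a localized term over $\omega_{0}$ appears, later dominated by the observation term over $\omega\supset\omega_{0}$ through a cut-off argument. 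The discrete Leibniz corrections and the commutators of the weight with $D_{i},A_{i}$ generate additional factors of the form $(\tau\theta h)^{k}$ times the dominant terms; this is exactly where the constraint $\tau h(\delta T^{2})^{-1}\leq\varepsilon$ enters, since $\theta\lesssim(\delta T^{2})^{-1}$ makes it equivalent to $\tau\theta h\lesssim\varepsilon$.

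Since both $\|\mathcal{L}_{1}v\|^{2}$ and $\|\mathcal{L}_{2}v\|^{2}$ survive on the left after the cross term is bounded below, I would recover $I_{p}$ from the equation: writing $\mathcal{A}_{h}v=\mathcal{L}_{1}v-(\tau\theta)^{2}\sum_{i}\gamma_{i}|A_{i}D_{i}\varphi|^{2}v+\dots$ bounds $\int_{Q}(\tau\theta)^{p-1}|\mathcal{A}_{h}v|^{2}$ by $\|\mathcal{L}_{1}v\|^{2}$ (using $(\tau\theta)^{p-1}\leq1$ for $p\leq1$, $\tau\theta\geq1$) plus the already-controlled term $\tau^{p+3}\theta^{p+3}|v|^{2}$, and a discrete elliptic regularity estimate then yields the second-order quantity $\sum_{i,j}\gamma_{i}\gamma_{j}|D_{ij}v|^{2}$; likewise $(\tau\theta)^{p-1}|\partial_{t}v|^{2}$ is obtained from $\|\mathcal{L}_{2}v\|^{2}$ after controlling the transport part by the gradient term $\tau^{p+1}\theta^{p+1}|D_{i}v|^{2}$ of $J_{p}$. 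Transferring back from $v$ to $y=e^{-\tau\theta\varphi}v$ yields $I_{p}(y)+J_{p}(y)$, the cases $p=0,1$ being treated uniformly. The error term is born in the time integration by parts: because $\theta$ is kept bounded (regularized by $\delta$) rather than blowing up at $t=0,T$, the boundary contributions at the endpoints do not vanish, and the crude bound $|D_{i}v(0)|\lesssim h^{-1}|v(0)|$ on the discrete spatial energy produces exactly $Ch^{-2}\int_{\mathcal{W}}(\tau\theta(0))^{p}(|y(0)|^{2}+|y(T)|^{2})e^{2\tau\theta(0)\varphi}$.

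The main obstacle is the simultaneous absorption of all the discrete remainders $\mathcal{R}v$ and of the summation-by-parts corrections into $I_{p}+J_{p}$: each place where the finite-difference calculus departs from the continuous one---the $\tfrac{h^{2}}{4}D_{i}u\,D_{i}w$ terms, the mismatch between $A_{i}D_{i}$ and the exact derivative, and the non-commutation of $e^{\tau\theta\varphi}$ with $D_{i},A_{i}$---yields errors that are small only under $\tau\theta h\leq\varepsilon$. Keeping the bookkeeping of the powers of $\tau,\theta,\lambda,h$ sharp enough that these are dominated, with the parameters fixed in the order $\lambda$ large, then $\tau$ large, then $h$ small subject to $\tau\theta h\leq\varepsilon$, is the delicate technical core. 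A secondary difficulty is ensuring the second-order gain in $I_{p}$ persists through this absorption, since $D_{i}(\gamma_{i}D_{i}\cdot)$ couples to the averaging operators through further $h$-dependent terms that must be shown to be lower order under the same smallness constraint.
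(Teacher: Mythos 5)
Your outline is workable in principle, but it takes a genuinely different and much heavier route than the paper. You propose to rebuild the whole semi-discrete Carleman machinery from scratch: conjugation $v=e^{\tau\theta\varphi}y$, splitting $P_{\tau}v=\mathcal{L}_{1}v+\mathcal{L}_{2}v+\mathcal{R}v$, positivity of the cross term, absorption of discrete remainders under $\tau\theta h\lesssim\varepsilon$, and the $h^{-2}$ endpoint error coming from the non-vanishing weight at $t=0,T$. The paper instead treats all of that as acquired: it observes that the proof of \eqref{ine:carleman:semi-discrete} in \cite{BLR-2014} (Lemmas 3.4, 3.7 and 3.9 there) goes through when $\gamma_{i}$ also depends on $t$ with $\partial_{t}\gamma_{i}$ bounded, and then bootstraps. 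Concretely, the equation gives $\tau^{-1}\|\theta^{-1/2}e^{\tau\theta\varphi}\mathcal{A}_{h}y\|^{2}_{L^{2}_{h}(Q)}\leq 2\tau^{-1}\|\theta^{-1/2}e^{\tau\theta\varphi}\partial_{t}y\|^{2}_{L^{2}_{h}(Q)}+2\tau^{-1}\|\theta^{-1/2}e^{\tau\theta\varphi}g\|^{2}_{L^{2}_{h}(Q)}$, whose right side is already controlled by \eqref{ine:carleman:semi-discrete}; the second-order piece of $I_{0}$ is then extracted from $\|\theta^{-1/2}e^{\tau\theta\varphi}\mathcal{A}^{(a)}_{h}y\|^{2}_{L^{2}_{h}(Q)}=\sum_{i,j}\int_{Q}\alpha_{ij}D_{i}^{2}y\,D_{j}^{2}y$ by two discrete integrations by parts in the cross terms $i\neq j$, using the expansions of $A_{i}D_{i}\alpha_{ij}$, $D^{2}_{ij}\alpha_{ij}$, $A^{2}_{i}\alpha_{ij}$, $D_{j}A_{i}\alpha_{ij}$ from \cite{AA:perez:2024} and absorbing the remainders into $J_{0}$ for $\tau$ large under the constraint $\tau h(\delta T^{2})^{-1}\leq\varepsilon$. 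This explicit computation is exactly what your appeal to ``a discrete elliptic regularity estimate'' leaves as a black box: in the weighted discrete setting that step is the main new technical content of the theorem, and your sketch should acknowledge that it is the discrete derivatives of the weight $\alpha_{ij}$ (of size $s\mathcal{O}_{\lambda}(1)$ plus $s\mathcal{O}_{\lambda}(sh)$ corrections) that the absorption must beat. Likewise, for $p=1$ the paper does not carry $p$ symbolically through the conjugation as you suggest; it applies the $p=0$ estimate to $yv$ with $v^{2}=\tau\theta$, using $\bigl|\theta^{-1/2}\tfrac{d}{dt}\sqrt{\theta}\bigr|\leq\tfrac{T}{2}\theta$ for the time-derivative commutator---a shortcut worth adopting, since rerunning the cross-term analysis with shifted powers multiplies the bookkeeping without gain. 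Your route buys self-containedness and would expose explicitly where the bound $|\partial_{t}\gamma_{i}|\leq\mbox{reg}^{0}$ enters (which your sketch currently never mentions, though it is one of the paper's two extensions of \cite{BLR-2014}); the paper's route buys brevity and isolates the genuine novelties, namely the time-dependent diffusion and the $D^{2}_{ij}$ term on the left-hand side.
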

Finally, assume that the coefficients $\gamma_i, b_i$, $i=1,\ldots,d$, and $c$, are independent of time. Our final main result is a reconstruction algorithm for the inverse problem of identifying the potential $p\in L^\infty_h({\mathcal{W}})$ from the measurements of $y$, the solution of
\begin{equation}\label{system:coefficient:02}
\begin{cases}
    \partial_{t}y(t,x)-\mathcal{A}_{h}y(t,x)+p(x)y(t,x)=g(t,x),\quad &(t,x)\in Q,\\
    y(0,x)=y_{ini}(x), &x\in\mathcal{W},\\
    y(t,x)=f(t,x), &(t,x)\in(0,T)\times\partial\mathcal{W},
\end{cases}
\end{equation}
where $f,g$ and $y_{ini}$ are given functions. 

More precisely, we consider the measurement operator 
 $$\Lambda_p:= (y_p|_{t=T/2},\partial_ty_p|_{Q_{\omega}},\partial_ty_p|_{\{t=0\}\times\mathcal{W}}),$$
 where $y_p$ is the solution of \eqref{system:coefficient:02} associated with $p$. 
 
We also define, for $m>0$, the set $$\mathcal{X}_{m}:= \{p\in L^{\infty}_h(\mathcal{W}):\;\|p\|_{L^{\infty}_h}\leq m\}.$$

Given $p^{\ast}\in \mathcal{X}_{m}$, assume that there exists $\alpha>0$ such that $\Big|y_{p^\ast}|_{t=T/2}\Big|>\alpha$. Considering the functional $\mathcal{J}_{\tau,p}$ defined in \eqref{FunctionalJ}, we can reconstruct the coefficient $p^{\ast}$ from the measurements $\Lambda_{p^{\ast}}$ through the following iterative scheme. 

\begin{algorithm}\label{Alporithmo_Reconstruccion}
\caption{Iterative Reconstruction of $p$}
\label{alg:pk_reconstruction}
\begin{algorithmic}
  \State \textbf{Initialization:}
  \State Set $p_{0}=0$
  \Statex

  \For{$k=0,1,2,\ldots$ until convergence}
    \State \textbf{Step 1: Forward solve}
    \State Compute $y_{p_k}$, the solution of \eqref{system:coefficient:02} with $p=p_k$.
    
    \State \textbf{Step 2: Residuals}
    \State Set 
    \[
      \mu_k :=  \partial_t y_{p_k} - \partial_t y_{p^\ast}
      \quad \text{on } Q_\omega,
    \]
    and
    \[
      \eta_k :=  \partial_t y_{p_k}\Big|_{t=0} - \partial_t y_{p^\ast}\Big|_{t=0}
      \quad \text{on } \mathcal{W}.
    \]
    
    \State \textbf{Step 3: Minimization}
    \State Compute 
    \[
      u^\ast_{p_k} = \operatorname*{argmin}\,\mathcal{J}_{\tau,p_k}[\mu_k,\eta_k]
    \]
    
    \State \textbf{Step 4: Update}
    \State Set
    \[
      \tilde{p}_{k+1} :=  \frac{y_{p_k}\Big|_{t=T/2}}{y_0}\,p_k
      + \frac{u^\ast_{p_k}\Big|_{t=T/2} - \mathcal{A}_h y_{p_k}\Big|_{t=T/2} + \mathcal{A}_h y_0}{y_0},
    \]
    where $y_0 = y_{p^\ast}\Big|_{t=T/2}$
    
    \State \textbf{Step 5: Projection}
    \State Set
    \[
      p_{k+1} = T_m(\tilde{p}_{k+1}),
    \]
    where
    \[
      T_m(x) := 
      \begin{cases}
        x, & |x|\leq m,\\
        \operatorname{sign}(x)\,m, & |x|> m.
      \end{cases}
    \]
  \EndFor
\end{algorithmic}
\end{algorithm}

The convergence of the above algorithm for $\tau$ large enough is guaranteed by the following result.

\begin{theorem}\label{theo:convergence} Let $m>0$, $p^\ast\in\mathcal{X}_{m}$ and assume that $\gamma_i, b_i$, $i=1,\ldots,d$, and $c$ to be independent of time. Assume that  \eqref{assumtion:psi} and \eqref{funcion-peso-2} hold, that there exists $\alpha>0$ such that
$\Big|y_{p^{\ast}}|_{t=T/2}\Big|>\alpha$, where $y_{p^\ast}$ is the solution of \eqref{system:coefficient:02} with $p=p^\ast$. Then, there exists $M > 0$ such that for any $\tau\geq \tau_{0}(T+T^{2})$, $0<h\leq h_{0}$, there exists $0<\delta(h) \leq 1/2$ satisfying $\tau h(\delta T^{2})^{-1}\leq \varepsilon$, and the estimate
$$\int_{\mathcal{W}}e^{2\tau\theta(T/2)\varphi}|p_{k+1}-p^\ast|^2\leq M \tau^{-\frac{3}{2}}\int_{\mathcal{W}}e^{2\tau\theta(T/2)\varphi}|p_k-p^\ast|^2,$$ holds for all $k\in\mathbb{N}$.
In particular, for $\tau$ is large enough, the above algorithm converges.
\end{theorem}
The remainder of the paper is organized as follows. Section \ref{sec:proof:carleman} introduces the notation and preliminaries to be used throughout the paper, followed by the proof of the Carleman estimate stated in Theorem \ref{theo:Carleman}. Section \ref{sec:inverse:problem} is intended to study the stability estimate and the analysis of the inverse problem. Section \ref{sec:stability:reconstruction} is devoted to the proof of the stability and convergence of the reconstruction algorithm. Finally, concluding remarks and future perspectives are discussed in Section \ref{sec:concluding}.
%%%%%%%%%%%%%%%

\section{A new Carleman estimate for a semi-discrete parabolic operator}\label{sec:proof:carleman}

%%%%%%%%%%%%%%%%%%%%%%
\subsection{Some preliminary notation}
In this section, we complement the notation for meshes and operators given in the previous section. Recall that $\mathcal{W}:= \Omega\cap\mathcal{K}_{h}$, where $\mathcal{K}_{h}$ is defined in \eqref{mesh}. Then, using the translation operators $\tau_{\pm i}(\mathcal{W}):= \left\{ x\pm\frac{h}{2}e_{i}\mid x\in \mathcal{W}\right\}$, we define the dual mesh in direction $i$  
\begin{equation}\label{sets:dual}
    \mathcal{W}_{i}^{\ast}:= \tau_{+i}\left( \mathcal{W}\right)\cup\tau_{-i}\left( \mathcal{W}\right).
\end{equation}
For the difference and average operators defined in \eqref{op:difference:average} we have a Leibniz rule for functions defined in $\overline{\mathcal{W}}_{ij}:= (\mathcal{W}_{i}^{\ast})_{j}^{\ast}=\mathcal{W}_{ji}^{\ast\ast}$.
\begin{proposition}[{\cite[Lemma 2.1]{E:DG:2011}}]\label{pro:product}
Given $u,v\in C(\overline{\mathcal{W}})$, the following identities hold in $\mathcal{W}^{\ast}_{i}$. For the difference operator
\begin{equation}\label{eq:difference:product}
    D_{i}(u\,v)=D_{i}u\, A_{i}v+A_{i}u\,D_{i}v,
    \end{equation}
and for the average operator \begin{equation}\label{eq:average:product}
     A_{i}(u\,v)= A_{i}u\,A_{i}v+\frac{h^{2}}{4}D_{i}u\,D_{i}v.
\end{equation}
\end{proposition}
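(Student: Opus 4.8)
The plan is to reduce both identities to the single algebraic observation that the half-step translation operators act multiplicatively: since each $\tau_{\pm i}$ is merely evaluation at the shifted point $x\pm\frac h2 e_i$, one has $\tau_{\pm i}(uv)(x)=\big(\tau_{\pm i}u(x)\big)\big(\tau_{\pm i}v(x)\big)$ for every $x\in\mathcal{W}_i^{\ast}$, because pointwise multiplication commutes with evaluation. First I would fix such an $x$ and abbreviate $u^{\pm}:=\tau_{\pm i}u(x)$ and $v^{\pm}:=\tau_{\pm i}v(x)$, so that, by the definitions in \eqref{op:difference:average}, $A_iu=\frac12(u^++u^-)$ and $D_iu=\frac1h(u^+-u^-)$ (and likewise for $v$), while the multiplicativity just recorded gives $A_i(uv)=\frac12(u^+v^++u^-v^-)$ and $D_i(uv)=\frac1h(u^+v^+-u^-v^-)$.

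For the difference identity \eqref{eq:difference:product}, I would expand the right-hand side $D_iu\,A_iv+A_iu\,D_iv$ with common prefactor $\frac1{2h}$ into the four products $u^{\pm}v^{\pm}$. The diagonal terms $u^+v^+$ and $u^-v^-$ each occur twice with the correct sign, whereas the mixed products $u^+v^-$ and $u^-v^+$ enter the two summands with opposite signs and cancel. What remains is exactly $\frac1h(u^+v^+-u^-v^-)=D_i(uv)$.

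For the average identity \eqref{eq:average:product}, I would expand $A_iu\,A_iv+\frac{h^2}4 D_iu\,D_iv$ (here $h_i=h$ in the uniform mesh), noting that the factor $h^2$ cancels the two factors $\frac1h$ produced by the difference operators, so that the second summand equals $\frac14(u^+-u^-)(v^+-v^-)$ while the first equals $\frac14(u^++u^-)(v^++v^-)$. Adding these, the mixed products $u^+v^-$ and $u^-v^+$ again cancel while the diagonal products double, which yields $\frac12(u^+v^++u^-v^-)=A_i(uv)$.

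Both computations are purely algebraic, so there is no genuine analytic obstacle; the verification is symmetric in the two identities, the only structural difference being whether the cross terms cancel against the diagonal ones (difference) or supplement them (average). The single point requiring attention is the bookkeeping of grids: the identities are asserted on the dual grid $\mathcal{W}_i^{\ast}$ of \eqref{sets:dual}, which is precisely the set on which all the half-step translates $u^{\pm},v^{\pm}$ of the functions $u,v\in C(\overline{\mathcal{W}})$ are simultaneously defined, so that every evaluation appearing above is legitimate.
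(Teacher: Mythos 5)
Your proof is correct. The paper offers no proof of this proposition at all --- it is quoted verbatim from \cite[Lemma 2.1]{E:DG:2011} --- and your argument (using the multiplicativity $\tau_{\pm i}(uv)=\tau_{\pm i}u\,\tau_{\pm i}v$, writing $A_i$, $D_i$ in terms of the half-step translates $u^{\pm}$, $v^{\pm}$, and checking the cancellation/doubling of the cross terms, with $h_i=h$ on the uniform mesh and the evaluations legitimate on $\mathcal{W}_i^{\ast}$ since $u,v\in C(\overline{\mathcal{W}})$) is exactly the standard direct verification behind that cited lemma.
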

\begin{remark} Several useful consequences follow from \eqref{eq:average:product}; for instance, for the average operator we have
\begin{equation}\label{eq:promediocuadrado}
    A_{i}(|u|^{2})=\left|A_{i}u \right|^{2}+\frac{h^{2}}{4}\left| D_{i}u\right|^{2},
\end{equation}
and 
\begin{equation}\label{eq:promedioinequality}
A_{i}(|u|^{2})\geq \left|A_{i}u\right|^{2}.
\end{equation}
For the difference operator, we have
\begin{equation}\label{eq:derivadacuadrado}
    D_{i}(|u|^{2})=2D_{i}u\,A_{i}u.
\end{equation}
\end{remark}
We now introduce the discrete integration by parts for the operators \eqref{op:difference:average}. Define the boundary of $\mathcal{W}$ in the direction $e_{i }$ as $\partial_{i}\mathcal{W}:= \overline{\mathcal{W}}_{ii}\setminus\mathcal{W}$. Moreover, the boundary of $\mathcal{W}$ is defined by
\begin{equation}
    \begin{split}
      \partial \mathcal{W}&:= \bigcup_{i=1}^{d} \overline{\mathcal{W}}_{ii}\setminus\mathcal{W}.
    \end{split}
\end{equation}
For a given set $\mathcal{W}\subseteq\mathcal{K}_{h}$ and $u\in C(\mathcal{W})$, define the discrete integral as
\begin{equation}
    \int_{\mathcal{W}} u:= h^{d}\sum_{x\in \mathcal{W}} u(x),
\end{equation}
and the corresponding $L^{2}_{h}$ inner product on $C(\mathcal{W})$:
\begin{equation}
    \langle u,v\rangle_{\mathcal{W}}:= \int_{\mathcal{W}}u\,v,\quad \forall u,v\in C(\mathcal{W}),
\end{equation}
with the associated norm
\begin{equation}
    \left\|u\right\|_{L^{2}_{h}(\mathcal{W})}:= \sqrt{\langle u,u\rangle_{\mathcal{W}}}.
\end{equation}
For $u\in C(\mathcal{W})$, define its $L^{\infty}_{h}(\mathcal{W})$ norm by
\begin{equation}
    \left\| u\right\|_{L_{h}^{\infty}(\mathcal{W})}:= \max_{x\in \mathcal{W}}\left\{ |u(x)|\right\},
\end{equation}
and, for $u\in C(\overline{\mathcal{W}})$,
\begin{align}
    \|u\|^2_{H^{1}_{h}(\mathcal{W})}:=\|u\|_{L_{h}^{2}(\mathcal{W})}^2+\sum_{i\in\llbracket 1,d\rrbracket}\int_{\mathcal{W}^\ast_i}|D_{i}u|^{2},\\
    \|u\|^2_{H^{2}_{h}(\mathcal{W})}:=\|u\|_{L_{h}^{2}(\mathcal{W})}^2+\sum_{i\in\llbracket 1,d\rrbracket}\int_{\mathcal{W}}|D^{2}_{i}u|^{2}+|A_{i}D_{i}u|^{2}.
\end{align}
For boundary integrals, given $u\in C(\partial_{i}\mathcal{W})
$, define
\begin{equation}
    \int_{\partial_{i}\mathcal{W}} u:= h^{d-1}\sum_{x\in\partial_{i}\mathcal{W}} u(x).
\end{equation}

Finally, for boundary points, define the exterior normal to $\mathcal{W}$ in the direction $e_{i}$ as $\nu_{i}\in C(\partial\mathcal{W}_{i})$:
\begin{equation}
    \forall x\in\partial_{i}\mathcal{W}, \nu_{i}(x):= \begin{cases} \ \ 1 &  \mbox{if }\tau_{-i}(x)\in \mathcal{W}_{i}^{\ast} \mbox{ and }\tau_{+i}(x)\notin\mathcal{W}_{i}^{\ast},\\ 
    -1 &  \mbox{if }\tau_{-i}(x)\notin \mathcal{W}_{i}^{\ast} \mbox{ and }\tau_{+i}(x)\in\mathcal{W}_{i}^{\ast},\\ 
    \ \ \,0 &  \mbox{elsewhere}.
    \end{cases}
\end{equation}
 We also define the trace operator $t_{r}^{i}$ for $u\in C(\mathcal{W}^{\ast}_{i})$ as
\begin{equation}
    \forall x\in \partial_{i}\mathcal{W},\ t_{r}^{i}(u)(x):= \begin{cases}
    u(\tau_{-i}(x)),\ & \nu_{i}(x)=1,\\
    u(\tau_{+i}(x)),& \nu_{i}(x)=-1,\\
     \ \ 0, & \nu_{i}(x)=0.\end{cases}
\end{equation}
%-----------------
Then, by using the previous notation, we have the following discrete integration by parts identities.
%-----------------------------
\begin{proposition}[{\cite[Lemma 2.2]{LOPD:2023}}]\label{prop:integralbyparts}
For any $v\in C(\mathcal{W}_{i}^{\ast})$, $u\in C(\overline{\mathcal{W}}_{i})$ we have, for the difference operator
\begin{equation}\label{eq:int:dif}
    \int_{\mathcal{W}}u\,D_{i}v=-\int_{\mathcal{W}_{i}^{\ast}}v\,D_{i}u+\int_{\partial_{i}\mathcal{W}}u\,t_{r}^{i}(v)\nu_{i},\\
\end{equation}    
and for the average operator
\begin{equation}\label{eq:int:ave}
    \int_{\mathcal{W}}u\,A_{i}v=\int_{\mathcal{W}_{i}^{\ast}}v\,A_{i}u-\frac{h}{2}\int_{\partial_{i} \mathcal{W}}u\,t_{r}^{i}(v).
\end{equation}
\end{proposition}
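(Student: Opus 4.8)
The plan is to reduce both identities to a one-dimensional discrete summation by parts (Abel summation) in the direction $e_i$, treating the remaining $d-1$ coordinates as passive parameters. First I would expand the left-hand side of \eqref{eq:int:dif} with the definitions of the discrete integral and of $D_i$, obtaining
\[
\int_{\mathcal{W}} u\, D_i v = h^{d-1}\sum_{x\in\mathcal{W}} u(x)\bigl(\tau_i v(x)-\tau_{-i}v(x)\bigr).
\]
Since the $d$-dimensional sum factors into a sum over the coordinates orthogonal to $e_i$ times a line sum along the $i$-th axis, it suffices to perform the argument line by line.

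Next I would reindex the two translated sums: substituting $y=x+\tfrac h2 e_i$ in the term carrying $\tau_i v$ and $y=x-\tfrac h2 e_i$ in the term carrying $\tau_{-i}v$ transfers the shift from $v$ onto $u$, giving
\[
\sum_{x\in\mathcal{W}} u(x)\tau_i v(x)=\sum_{y\in\tau_i(\mathcal{W})}\tau_{-i}u(y)\,v(y),\qquad \sum_{x\in\mathcal{W}} u(x)\tau_{-i} v(x)=\sum_{y\in\tau_{-i}(\mathcal{W})}\tau_{i}u(y)\,v(y).
\]
On the overlap $\mathcal{W}_i'=\tau_i(\mathcal{W})\cap\tau_{-i}(\mathcal{W})$ the two contributions combine into $-h\,v(y)\,D_iu(y)$, which, summed over all of $\mathcal{W}_i^*=\tau_i(\mathcal{W})\cup\tau_{-i}(\mathcal{W})$ (legitimate because $u\in C(\overline{\mathcal{W}}_i)$ is defined at the neighbouring dual nodes), yields the principal term $-\int_{\mathcal{W}_i^*} v\,D_iu$. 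Exactly the same reindexing applied to \eqref{eq:int:ave} gives, on the overlap, the combination $+\,A_iu(y)\,v(y)$: the structure $\tfrac12(\tau_i+\tau_{-i})$ produces a plus sign and no factor $h^{-1}$, so the principal term is $+\int_{\mathcal{W}_i^*} v\,A_iu$.

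It then remains to account for the points of the symmetric difference $\tau_i(\mathcal{W})\,\triangle\,\tau_{-i}(\mathcal{W})$, where one reindexed sum has no partner and the cancellation above fails; a short computation shows the residue equals $h^{d-1}\bigl(\sum_{\tau_i(\mathcal{W})\setminus\tau_{-i}(\mathcal{W})}\tau_i u\,v-\sum_{\tau_{-i}(\mathcal{W})\setminus\tau_i(\mathcal{W})}\tau_{-i}u\,v\bigr)$. Each such uncancelled dual node sits adjacent to exactly one node of $\partial_i\mathcal{W}$, and I would set up this bijection so that the residue becomes $\int_{\partial_i\mathcal{W}} u\,t_r^i(v)\,\nu_i$ for the difference case: the sign is supplied precisely by $\nu_i$ (which records from which side the missing neighbour lies), while $t_r^i(v)$ selects the value of $v$ at the unique dual node attached to that boundary point. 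For the average case the residue carries the prefactor $\tfrac{h^d}{2}=\tfrac h2\,h^{d-1}$, producing the weight $-h/2$ in \eqref{eq:int:ave}.

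The main obstacle is this boundary bookkeeping: one must verify that every dual node left uncancelled by the index shift corresponds bijectively to a point of $\partial_i\mathcal{W}$, and that the orientation assigned by $\nu_i$ together with the selection made by $t_r^i$ reproduces \emph{exactly} the signs in \eqref{eq:int:dif}--\eqref{eq:int:ave}. Checking the corner nodes and the lines that meet $\partial\Omega$ at both ends (where both $\tau_{\pm i}$-translates may be absent) is where the definitions of $\nu_i$ and $t_r^i$ must be invoked with care; by contrast, the interior algebra is a routine telescoping.
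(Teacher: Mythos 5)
Your proof is correct, and it is essentially the canonical argument: the paper itself does not prove this proposition but quotes it from \cite[Lemma 2.2]{LOPD:2023}, whose proof is exactly your computation --- reindex the shifted sums so that on the overlap $\mathcal{W}_i'$ the two contributions telescope into $-h\,v\,D_iu$ (resp.\ $v\,A_iu$), extend the principal term to $\mathcal{W}_i^{\ast}$ using that $u$ is defined on the enlarged mesh, and identify the residue on $\tau_i(\mathcal{W})\,\triangle\,\tau_{-i}(\mathcal{W})$ with the boundary integral via $\nu_i$ and $t_r^i$, the factor $h^{d-1}$ (resp.\ $\tfrac{h}{2}h^{d-1}$) matching the definition of $\int_{\partial_i\mathcal{W}}$. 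The only detail to make explicit is that on the tensor-product grid $\mathcal{W}=\Omega\cap\mathcal{K}_h$ every line parallel to $e_i$ consists of consecutive nodes, so each line contributes exactly one uncancelled dual node at each end (a one-node line contributing both), which is precisely what makes your bijection with $\partial_i\mathcal{W}$, the signs $\nu_i=\pm 1$, and the selection by $t_r^i$ come out as stated in \eqref{eq:int:dif}--\eqref{eq:int:ave}.
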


%%%%%%%%%%%%%%%%%%%%
\subsection{On the Carleman weight function}
We introduce the classical weight function used for the semi-discrete parabolic operator, that is, we consider the weight function used in \cite{BLR-2014} and also used in \cite{BHS2020,CLNP2022,GCHS2021,HS2023,LMZP2023}.\\
\textbf{Assumption:} Let $\overline{\omega_0}\subset \omega$ be an arbitrary fixed subdomain of $\Omega$. Let $\widehat{\Omega}$ be a smooth, open, and connected neighborhood of $\overline{\Omega}$ in $\mathbb{R}^d$. The function $x \mapsto \psi(x)$ belongs to $\mathcal{C}^p(\widehat{\Omega}, \mathbb{R})$, for sufficiently large $p$, and satisfies, for some $c > 0$,
\begin{equation}\label{assumtion:psi}
\psi > 0 \quad \text{in } \widehat{\Omega}, \quad |\nabla \psi| \geq c \quad \text{in } \widehat{\Omega} \setminus \omega_0, \quad \text{and} \quad \partial_{n_i} \psi(x) \leq -c < 0 \quad \text{for } x \in V_{\partial_i \Omega},
\end{equation}
where $V_{\partial_i \Omega}$ is a sufficiently small neighborhood of $\partial_i \Omega$ in $\widehat{\Omega}$, where the outward unit normal $n_i$ to $\Omega$  extends from $\partial_i \Omega$.

For $\lambda\geq 1$ and $K>\|\psi\|_{\infty}$, we introduce the functions
\begin{align}\label{funcion-peso-2}
\varphi(x)=e^{\lambda\psi(x)}-e^{\lambda K}<0,
\end{align}
 and, for $0<\delta \leq 1/2$,
\begin{equation}\label{theta-delta}
    \theta(t)=\frac{1}{(t+\delta T)(T+\delta T-t)},\quad t\in [0,T].
\end{equation}

Given $\tau\geq 1$, we set
\begin{equation}\label{eq1}
s(t)=\tau\theta(t).
\end{equation}

\begin{remark}
The parameter $\delta$, chosen so that $0<\delta\leq\frac{1}{2}$, avoids singularities at time $t=0$ and $t=T$. Notice that
\begin{equation}\label{eq:theta} 
\underset{t\in [0,T]}{\max} \theta(t)=\theta(0)=\theta(T)=\frac{1}{T^{2}\delta(1+\delta)}\leq \frac{1}{T^{2}\delta}, 
\end{equation}
and $\underset{t\in[0,T]}{\min}\theta(t)= \theta(T/2)=\frac{4}{T^{2}(1+2\delta)^2}$. Moreover,
\begin{equation}\label{eq:theta'}\frac{d\theta}{dt}=2\left(t-\frac{T}{2}\right)\theta^2(t).
\end{equation}
\end{remark}

In the case where $\gamma_i$ depends only on $x$, the following semi-discrete Carleman estimate was proved in \cite{BLR-2014}.
\begin{theorem}[c.f. {\cite[Theorem 1.4]{BLR-2014}}] Suppose that $\psi$ satisfies assumption \eqref{assumtion:psi}, and that $\varphi$ is defined according to \eqref{funcion-peso-2}. For $\lambda\geq 1$ sufficiently large, there exist $C$, $\tau_{0}\geq 1$, $h_{0}>0$, $\varepsilon >0$, depending on $\omega$, $\omega_{0}$, $\mbox{reg}^{0}$, $T$, and $\lambda$, such that it holds
\begin{align}\label{ine:carleman:semi-discrete}
\tau^{-1}\left\|\theta^{-1/2}e^{\tau\theta\varphi}\partial_{t}y \right\|^{2}_{L^{2}_{h}(Q)}+J_0(y) \leq & C\left(\left\|e^{\tau\theta\varphi}g\right\|^{2}_{L^{2}_{h}(Q)}+\int_{(0,T)\times\omega}\tau^3\theta^3e^{2\tau\theta\varphi}|y|^2\right)\\
&+Ch^{-2}\int_{\mathcal{W}} \left(\Big|y|_{t=0}\Big|^2+\Big|y|_{t=T}\Big|^2\right)e^{2\tau\theta(0)\varphi},\notag
\end{align}

for all $\tau\geq \tau_{0}(T+T^{2})$, $0<h\leq h_{0}$, $0<\delta \leq 1/2$, $\tau h(\delta T^{2})^{-1}\leq \varepsilon$, and $y\in \mathcal{C}^{1}([0,T];\overline{\mathcal{W}})$ being  solution of \eqref{system:discrete}.
\end{theorem}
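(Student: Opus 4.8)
The plan is to prove \eqref{ine:carleman:semi-discrete} by the conjugation-and-splitting method of Fursikov–Imanuvilov, carried out entirely within the discrete calculus of Propositions \ref{pro:product} and \ref{prop:integralbyparts}. First I would introduce the weighted unknown $v:=e^{\tau\theta\varphi}y=e^{s\varphi}y$ and conjugate the operator, so that $v$ solves $\mathcal{P}v=g_s$ in $Q$ with $g_s:=e^{s\varphi}g$ and $v=0$ on $(0,T)\times\partial\mathcal{W}$, where $\mathcal{P}:=e^{s\varphi}(\partial_t-\mathcal{A}_h)e^{-s\varphi}$. Because $\gamma_i$ depends only on $x$, we have $\partial_t\gamma_i=0$, so the only time dependence entering $\mathcal{P}$ comes through $s_t=\tau\theta'$, which by \eqref{eq:theta'} is controlled by $\tau\theta^2$. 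I would then decompose $\mathcal{P}=\mathcal{P}_1+\mathcal{P}_2$ into its $L^2_h(Q)$-self-adjoint and skew-adjoint parts: schematically $\mathcal{P}_1$ gathers the diffusion $\sum_i D_i(\gamma_i D_i\cdot)$ together with the zeroth-order weight term of size $s^2|\nabla\varphi|^2$, while $\mathcal{P}_2$ gathers $\partial_t$ and the transport term $\sim s\gamma_i\partial_i\varphi\,A_iD_i\cdot$ (note $A_iD_i$ is skew-adjoint by \eqref{eq:int:dif}–\eqref{eq:int:ave} for constant coefficients).

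Next I would square the identity $\mathcal{P}_1v+\mathcal{P}_2v=g_s$ in $L^2_h(Q)$ to obtain $\|\mathcal{P}_1v\|^2+\|\mathcal{P}_2v\|^2+2\langle\mathcal{P}_1v,\mathcal{P}_2v\rangle_Q=\|g_s\|^2$, and expand the cross term using the discrete Leibniz rules \eqref{eq:difference:product}–\eqref{eq:derivadacuadrado} together with the discrete integration-by-parts formulas \eqref{eq:int:dif}–\eqref{eq:int:ave}. The dominant contributions of $2\langle\mathcal{P}_1v,\mathcal{P}_2v\rangle_Q$ reproduce, after returning to $y$, exactly the positive quantity $\tau^{-1}\|\theta^{-1/2}e^{s\varphi}\partial_ty\|^2+J_0(y)$, namely the terms of size $\tau^3\theta^3|\nabla\varphi|^4|v|^2$, $\tau\theta|\nabla\varphi|^2|D_iv|^2$ and $\tau^{-1}\theta^{-1}|\partial_tv|^2$, provided $\lambda$ is taken large so that the factor $\lambda^4e^{4\lambda\psi}|\nabla\psi|^4$ inside $|\nabla\varphi|^4$ dominates the competing zeroth-order terms. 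Off $\omega_0$ the lower bound $|\nabla\psi|\geq c$ from \eqref{assumtion:psi} makes these terms coercive; on $\omega_0$ I would insert a cutoff supported in $\omega$ and move the residual zeroth-order mass to the right-hand side, producing the observation integral $\int_{(0,T)\times\omega}\tau^3\theta^3e^{2s\varphi}|y|^2$.

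The genuinely discrete difficulty is that the conjugated weight no longer obeys the exact Leibniz rule: $D_i(e^{s\varphi})$ and $A_i(e^{s\varphi})$ differ from their continuous analogues $-s(\partial_i\varphi)e^{s\varphi}$ and $e^{s\varphi}$ only by bounded multiplicative factors that are functions of $hs\,\partial_i\varphi\sim h\tau\theta\,\partial_i\varphi$ and tend to $1$ as this quantity tends to $0$. All such corrections, and the residual terms proportional to $h^2|D_iv|^2$ generated by \eqref{eq:promediocuadrado} and \eqref{eq:average:product}, are uniformly controlled and close to their continuous counterparts precisely when the scaling constraint $\tau h(\delta T^2)^{-1}\leq\varepsilon$ holds, since $\theta\leq(\delta T^2)^{-1}$ by \eqref{eq:theta}. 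Encapsulating these as the weight lemmas of \cite{BLR-2014}, I would absorb every discrete error term and every lower-order term (the first- and zeroth-order parts of $\mathcal{A}_h$, the $s\theta'/\theta$ contributions) into the three dominant terms by first enlarging $\lambda$ and then enlarging $\tau\geq\tau_0(T+T^2)$; the bound $\mathrm{reg}(\Gamma)\leq\mathrm{reg}^0$ is what keeps these absorptions uniform in $\Gamma$.

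It remains to treat the boundary contributions in time produced when $\partial_t$ is integrated by parts against $\mathcal{P}_1$. In the continuous problem with $\delta=0$ the weight blows up at $t=0,T$, forcing $v(0)=v(T)=0$, so these terms vanish; here $\delta>0$ keeps $\theta(0)=\theta(T)=(T^2\delta(1+\delta))^{-1}$ finite, so the endpoint terms survive and must be carried to the right-hand side. Estimating the discrete spatial energy of $v$ at $t=0,T$ by the inverse inequality $\|D_iv\|_{L^2_h}^2\leq Ch^{-2}\|v\|_{L^2_h}^2$ and returning to $y$ yields exactly $Ch^{-2}\int_{\mathcal{W}}(|y(0,x)|^2+|y(T,x)|^2)e^{2s(0)\varphi}\,dx$. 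I expect the main obstacle to be the bookkeeping of the discrete correction terms in the cross-term expansion: showing that each term generated by the failure of the exact Leibniz rule is, under $\tau h(\delta T^2)^{-1}\leq\varepsilon$, either absorbed into $J_0(y)$ or of the admissible endpoint type, so that the estimate closes uniformly in $h$ rather than degenerating as $h\to0$. The remaining steps — the choice of $\lambda$, the localization via \eqref{assumtion:psi}, and the final absorption for $\tau$ large — follow the continuous template.
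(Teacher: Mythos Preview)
The paper does not give its own proof of this statement: it is quoted verbatim as \cite[Theorem~1.4]{BLR-2014} and used as a black box (the only comment, at the start of the proof of Theorem~\ref{theo:Carleman}, is that the lemmas of \cite{BLR-2014} still go through when $\gamma_i$ depends on $t$ with $\partial_t\gamma_i$ bounded). So there is nothing in the present paper to compare your proposal against beyond that citation.

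That said, your sketch is an accurate reconstruction of the strategy in \cite{BLR-2014}: the conjugation $v=e^{s\varphi}y$, the symmetric/antisymmetric splitting of the conjugated operator, the expansion of the cross term via the discrete Leibniz and integration-by-parts identities, the control of all discrete commutator corrections under the scaling constraint $\tau h(\delta T^{2})^{-1}\le\varepsilon$, and the survival of the time-endpoint terms (because $\delta>0$ keeps $\theta(0)=\theta(T)$ finite) bounded via an inverse inequality to produce the $h^{-2}$ factor. One small inaccuracy: the term $\tau^{-1}\|\theta^{-1/2}e^{s\varphi}\partial_t y\|^2$ is not extracted from the cross term $2\langle\mathcal{P}_1 v,\mathcal{P}_2 v\rangle_Q$ but rather from $\|\mathcal{P}_2 v\|^2$ (or equivalently from $\|\mathcal{P}_1 v\|^2+\|\mathcal{P}_2 v\|^2$ kept on the left); the cross term supplies only the $J_0(y)$ part. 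Otherwise the plan is sound.
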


We note that, in each set $\mathcal{W}^\ast_i$, $\gamma_i$ is the sampling of the given continuous diffusion coefficient $\gamma_i$ on the dual mesh $\mathcal{W}^\ast_i$. However, other consistent discretization of $\gamma_i$ on the dual meshes are possible, such as averaging the values of $\gamma_i$ sampled on the primal mesh  $\mathcal{W}$.

Let us highlight two main differences between the continuous Carleman estimate for a parabolic operator and its semi-discrete version as in \eqref{ine:carleman:semi-discrete}. The first difference is the additional term on the right-hand side, which is exclusively a discrete phenomenon also observed in other semi-discrete operators; see, for instance, \cite{BEO-2015,ZZ-2022,ZZ-2023}. The second difference is the absence on the left-hand side of a term involving the second-order spatial operator $D^{2}_{ij}$, which is crucial in inverse problems. The only semi-discrete Carleman estimate including this second-order term appears in \cite{BHSDT:2019}, in the one-dimensional case.Concerning this last issue, in higher dimensions, it is possible to incorporate it with a higher power of the Carleman parameter and also to consider the time dependency in diffusive functions $\gamma_i$ as stated in Theorem \ref{theo:Carleman}. 

\begin{proof}[Proof of Theorem \ref{theo:Carleman}] Let us first focus on the case $q=0$. Note that the steps developed in Lemmas 3.4, 3.7, and 3.9 from \cite{BLR-2014} still hold provided that $\partial_{t}\gamma_{i}$ is bounded for $i\in\{1,2,\ldots, d\}$. Hence, the Carleman estimate \eqref{ine:carleman:semi-discrete} holds for $\gamma_{i}\in C^1([0,T]\times\overline{\Omega})$.

Let us now focus on the incorporation of the second-order spatial term $D_{ij}^{2}$. First, from $(\ref{system:discrete})$, one has 
\begin{align}\label{ine:A}
    \tau^{-1}\left\| \theta^{-1/2}e^{\tau\theta\varphi}\mathcal{A}_{h}y\right\|^{2}_{L_{h}^{2}(Q)}\leq2\tau^{-1}\left\|\theta^{-1/2}e^{\tau\theta\varphi}\partial_{t}y \right\|^{2}_{L_{h}^{2}(Q)}+2\tau^{-1}\left\|\theta^{-1/2}e^{\tau\theta\varphi}g\right\|^{2}_{L_{h}^{2}(Q)}.
\end{align}

By denoting
\begin{equation*}
U(y):= \tau^{-1}\left\|\theta^{-1/2}e^{\tau\theta\varphi}\partial_{t}y \right\|^{2}_{L^{2}_{h}(Q)}+\tau^{-1}\left\| \theta^{-1/2}e^{\tau\theta\varphi}\mathcal{A}_{h}y\right\|^{2}_{L_{h}^{2}(Q)},
\end{equation*}
and using \eqref{ine:A},
\begin{align*}
U(y)+J_0(y)\leq&3\tau^{-1}\left\|\theta^{-1/2}e^{\tau\theta\varphi}\partial_{t}y \right\|^{2}_{L^{2}_{h}(Q)}+2\tau^{-1}\left\|\theta^{-1/2}e^{\tau\theta\varphi}g\right\|^{2}_{L_{h}^{2}(Q)}+3J_0(y).
\end{align*}
Hence, by applying the semi-discrete Carleman estimate \eqref{ine:carleman:semi-discrete} to the above inequality, it follows that
\begin{equation*}
\begin{aligned}
     U(y)+J_0(y)\leq &\tilde{C}\left(\left(1+2\tau^{-1} \right)\left\|e^{\tau\theta\varphi}g\right\|^{2}_{L_{h}^{2}(Q)}+\int_{(0,T)\times\omega}\tau^3\theta^3e^{2\tau\theta\varphi}|y|^2\right)\\
     &+\tilde{C}h^{-2}\int_{\mathcal{W}} \left(\Big|y|_{t=0}\Big|^2+\Big|y|_{t=T}\Big|^2\right)e^{2\tau\theta(0)\varphi}.
\end{aligned}
\end{equation*}
Thus, we have the estimate
\begin{equation}\label{ine:new:carleman}
\begin{aligned}
     U(y)+J_0(y)\leq &\overline{C}\left(\left\|e^{\tau\theta\varphi}g\right\|^{2}_{L_{h}^{2}(Q)}+\int_{(0,T)\times\omega}\tau^3\theta^3e^{2\tau\theta\varphi}|y|^2\right)\\
     &+\overline{C}h^{-2}\int_{\mathcal{W}} \left(\Big|y|_{t=0}\Big|^2+\Big|y|_{t=T}\Big|^2\right)e^{2\tau\theta(0)\varphi}. 
\end{aligned}
\end{equation}
In turn, our next task is to compare the terms $\displaystyle \tau^{-1}\left\| \theta^{-1/2}e^{\tau\theta\varphi}\mathcal{A}_{h}y\right\|^{2}_{L_{h}^{2}(Q)}$ and \break $\displaystyle \tau^{-1}\sum_{i,j\in\llbracket 1,d\rrbracket}\int_{Q_{ij}^{\ast}} \theta^{-1}\gamma_{i}\gamma_{j}e^{2\tau\theta\varphi}|D^{2}_{ij}y|^{2}$. To this end, we notice that using the discrete Leibniz rule, the operator $\mathcal{A}_{h}$ can be written as
\begin{align*}
    \mathcal{A}_{h}y&=\sum_{i\in\llbracket 1,d\rrbracket}A_{i}\gamma_{i}\,D_{i}^{2}y+\sum_{i\in\llbracket 1,d\rrbracket}D_{i}\gamma_{i}\,A_{i}D_{i}y\\
&=:\mathcal{A}^{(a)}_{h}y+\mathcal{A}^{(b)}_{h}y.
\end{align*}
Let us compute $\left\| \theta^{-1/2}e^{\tau\theta\varphi}\mathcal{A}_{h}^{(a)}y\right\|^{2}_{L_{h}^{2}(Q)}$. By setting $\alpha_{ij}:= \theta^{-1} e^{2\tau\theta\varphi}A_{i}\gamma_{i}A_{j} \gamma_{j}$ it follows that
    \begin{align}\label{eq:A(a)}
\left\|\theta^{-1/2}e^{\tau\theta\varphi}\mathcal{A}^{(a)}_{h}y\right\|^{2}_{L_{h}^{2}(Q)}=&\int_{Q}\theta^{-1}e^{2\tau\theta\varphi}\left[\sum_{i\in\llbracket 1,d\rrbracket}A_{i}\gamma_{i}\,D_{i}^{2}y \right]\left[\sum_{j\in\llbracket 1,d\rrbracket}A_{j}\gamma_{j}\,D_{j}^{2}y \right] \notag\\
        =&\sum_{i,j\in\llbracket 1,d\rrbracket}\int_{Q}\alpha_{ij} D_{i}^{2}y\,D_{j}^{2}y. 
    \end{align}

In the case $i=j$, thanks to the estimate $(A_{i}\gamma_{i})^{2}=(\gamma_{i})^{2}+\mathcal{O}(h)$ uniformly,
we get
\begin{equation}\label{A:a:i=j}
\begin{aligned}
\left\|\theta^{-1/2}e^{\tau\theta\varphi}\mathcal{A}^{(a)}_{h}y\right\|^{2}_{L_{h}^{2}(Q)}=&\sum_{i\in\llbracket 1,d\rrbracket}\int_{Q}\alpha_{ii} |D_{i}^{2}y|^{2}\\
=&\sum_{i\in\llbracket 1,d\rrbracket}\int_{Q}\theta^{-1} e^{2\tau\theta\varphi}(\gamma_{i})^{2}|D_{i}^{2}y|^{2}+\sum_{i\in\llbracket 1,d\rrbracket}\int_{Q}\theta^{-1} e^{2\tau\theta\varphi}\mathcal{O}(h)|D_{i}^{2}y|^{2}.
\end{aligned}
\end{equation}
Now, for $i\ne j$, an integration by parts with respect to the difference operator $D_{i}$ on (\ref{eq:A(a)}) gives
\begin{equation*}
    \begin{aligned}
        \left\|\theta^{-1/2}e^{\tau\theta\varphi}\mathcal{A}^{(a)}_{h}y\right\|^{2}_{L_{h}^{2}(Q)}=&-\sum_{i,j\in\llbracket 1,d\rrbracket}\int_{Q^{\ast}_{i}} D_{i}y\,D_{i}(\alpha_{ij} D_{j}^{2}y)+\sum_{i,j\in\llbracket 1,d\rrbracket}\int_{\partial_{i}Q}\alpha_{ij} D_{j}^{2}y\,t_{r}^{i}(D_{i}y)\nu_{i}.
\end{aligned}
\end{equation*}
We note that $D_{j}^{2}y=0$ on $\partial_{i}Q$ for $i\ne j$ since $y=0$ on $\partial Q$. Then, the above expression becomes
\begin{equation*}
\begin{aligned}
\left\|\theta^{-1/2}e^{\tau\theta\varphi}\mathcal{A}^{(a)}_{h}y\right\|^{2}_{L_{h}^{2}(Q)}=&-\sum_{i,j\in\llbracket 1,d\rrbracket}\int_{Q^{\ast}_{i}} D_{i}y\,D_{i}\alpha_{ij} \,A_{i}D_{j}^{2}y+D_{i}y\,A_{i}\alpha_{ij} D_{i}D_{j}^{2}y,
           \end{aligned}
\end{equation*}
where we have used the discrete product rule. Analogously, an integration by parts, concerning the difference operator $D_{j}$, yields
\begin{equation*}
    \begin{aligned}
        \left\|\theta^{-1/2}e^{\tau\theta\varphi}\mathcal{A}^{(a)}_{h}y\right\|^{2}_{L_{h}^{2}(Q)}=&\sum_{i,j\in\llbracket 1,d\rrbracket}\int_{Q_{ij}^{\ast}} D_{j}(D_{i}y\,D_{i}\alpha_{ij}) \,A_{i}D_{j}y+D_{j}(D_{i}y\,A_{i}\alpha_{ij}) D_{i}D_{j}y\\
        &-\sum_{i,j\in\llbracket 1,d\rrbracket}\int_{\partial_{j}Q_{i}^{\ast}}D_{i}yD_{i}\alpha_{ij}\, t_{r}^{j}(A_{i}D_{j}y)\nu_{j}+\int_{\partial_{j}Q_{i}^{\ast}}D_{i}yA_{i}\alpha_{ij}\, t_{r}^{j}(D_{i}D_{j}y)\nu_{j}\\
        =&\sum_{i,j\in\llbracket 1,d\rrbracket}\left(\int_{Q_{ij}^{\ast}} D_{j}(D_{i}y\,D_{i}\alpha_{ij}) \,A_{i}D_{j}y+D_{j}(D_{i}y\,A_{i}\alpha_{ij}) D^{2}_{ij}y\right),
           \end{aligned}
\end{equation*}
where we have used $D_{i}y=0$ on $\partial_{j}Q_{i}^{\ast}$ for $i\ne j$. Now, using the discrete Leibniz rule, we get
\begin{equation}
    \begin{aligned}
    \left\|\theta^{-1/2}e^{\tau\theta\varphi}\mathcal{A}_{h}^{(a)}y\right\|^{2}_{L_{h}^{2}(Q)}=&\sum_{i,j\in\llbracket 1,d\rrbracket}\left(\int_{Q_{ij}^{\ast}} D^{2}_{ij}y\,A_{i}D_{i}\alpha_{ij} \,A_{i}D_{j}y+\int_{Q_{ij}^{\ast}} A_{j}D_{i}y\,D^{2}_{ij}\alpha_{ij} \,A_{i}D_{j}y\right)\\
        &+\sum_{i,j\in\llbracket 1,d\rrbracket}\left(\int_{Q_{ij}^{\ast}} |D^{2}_{ij}y|^{2}\,A_{ij}^{2}\alpha_{ij} +A_{j}D_{i}y\,D_{j}A_{i}\alpha_{ij}\, D_{ij}^{2}y\right).
           \end{aligned}
\end{equation}
Moreover, thanks to the Young inequality: $-|ab|\ge-\frac{\tau^{-1/2}}{2}|a|^2-\frac{\tau^{1/2}}{2}|b|^2$,
\begin{equation}\label{ine:A:a}
    \begin{aligned}
    \tau^{-1}\left\|\theta^{-1/2}e^{\tau\theta\varphi}\mathcal{A}_{h}^{(a)}y\right\|^{2}_{L_{h}^{2}(Q)}\geq &-\frac{1}{2}\sum_{i,j\in\llbracket 1,d\rrbracket}\int_{Q_{ij}^{\ast}} \tau^{-3/2}|A_{i}D_{i}\alpha_{ij}|\,|D_{ij}^{2}y|^{2}+\tau^{-1/2}|A_{i}D_{i}\alpha_{ij}|\,|A_{i}D_{j}y|^{2}\\
    &-\frac{1}{2}\sum_{i,j\in\llbracket 1,d\rrbracket}\int_{Q_{ij}^{\ast}} \tau^{-1}|D^{2}_{ij}\alpha_{ij}|\,|A_{j}D_{i}y|^{2}+\int_{Q_{ij}^{\ast}} \tau^{-1}|D^{2}_{ij}\alpha_{ij}| \,|A_{i}D_{j}y|^{2}\\
        &-\frac{1}{2}\sum_{i,j\in\llbracket 1,d\rrbracket}\int_{Q_{ij}^{\ast}} \tau^{-1/2}|D_{j}A_{i}\alpha_{ij}|\,|A_{j}D_{i}y|^{2}+\tau^{-3/2}|D_{j}A_{i}\alpha_{ij}|\,| D_{ij}^{2}y|^{2}\\
        &+\tau^{-1}\sum_{i,j\in\llbracket 1,d\rrbracket}\int_{Q_{ij}^{\ast}} |D^{2}_{ij}y|^{2}\,A_{ij}^{2}\alpha_{ij}.
           \end{aligned}
\end{equation}
Now, by using \eqref{eq:promedioinequality}, $y=0$ on $\partial Q$, and the estimate $e^{-2\tau\theta\varphi}A_{i}D_{i}\alpha_{ij}=\tau\theta^{-1}\partial_{i}\psi \gamma_{i}\gamma_{j}+\mathcal{O}_{\lambda}(sh)+s\mathcal{O}_{\lambda}(sh)$ given in \cite[Theorem 3.5]{AA:perez:2024}, we obtain
\begin{equation*}
\begin{aligned}
  \sum_{i,j\in\llbracket 1,d\rrbracket} \int_{Q_{ij}^{\ast}} |A_{i}D_{i}\alpha_{ij}|\,|A_{i}D_{j}y|^{2}\leq&   \sum_{i,j\in\llbracket 1,d\rrbracket} \int_{Q_{ij}^{\ast}}|A_{i}D_{i}\alpha_{ij}|\,A_{i}(|D_{j}y|^{2})\\
  =&\sum_{i,j\in\llbracket 1,d\rrbracket}\int_{Q_{j}^{\ast}}|A_{i}D_{i}\alpha_{ij}|\,|D_{j}y|^{2}\\
  =&\sum_{j\in\llbracket 1,d\rrbracket}\int_{Q_{j}^{\ast}}\tau\theta^{-1}\gamma_{j} |\nabla\psi|_{\gamma}^{2}\,e^{2\tau\theta\varphi}\,|D_{j}y|^{2}\\
  &+\sum_{j\in\llbracket 1,d\rrbracket}\int_{Q^\ast_j}(\mathcal{O}_{\lambda}(sh)+s\mathcal{O}_{\lambda}(sh))e^{2\tau\theta\varphi}\,|D_{j}y|^{2},
  \end{aligned}
\end{equation*}
where we have used the notation $\displaystyle|\nabla \psi|^{2}_{\gamma}=\sum_{i\in\llbracket 1,d\rrbracket} \gamma_{i}\partial_{i}\psi$. Analogously, thanks to \cite[Theorem 3.5]{AA:perez:2024} we have
\begin{align*}
e^{-2\tau\theta\varphi}A_{i}D_{i}\alpha_{ij}=&\tau \partial_{i}\psi \gamma_{i}\gamma_{j}+\mathcal{O}_{\lambda}(sh)+s\mathcal{O}_{\lambda}(sh),\\
e^{-2\tau\theta\varphi}D_{ij}^{2}\alpha_{ij}=&\tau^{2}\theta\partial_{i}\psi\partial_{j}\psi \gamma_{i}\gamma_{j}+\tau\partial_{ij}^{2}\psi \gamma_{i}\gamma_{j}+s^{2}\mathcal{O}_{\lambda}(sh),\\
e^{-2\tau\theta\varphi}A^{2}_{i}\alpha_{ij}=&\theta^{-1}A_{i}\gamma_{i}A_{j}\gamma_{j}(1+\mathcal{O}_{\lambda}((\tau h)^{2}))=\theta^{-1} \gamma_{i}\gamma_{j}+\mathcal{O}(h)+\mathcal{O}_{\lambda}((sh)^{2}),\\
e^{-2\tau\theta\varphi}D_{j}A_{i}\alpha_{ij}=&\tau \partial_{j}\psi \gamma_{i}\gamma_{j}+\mathcal{O}_{\lambda}(sh)+s\mathcal{O}_{\lambda}(sh)=s\mathcal{O}_{\lambda}(1).
\end{align*}
Thus, by using the above estimates in the remaining terms of the right-hand side in \eqref{ine:A:a}, we obtain the following inequality for the operator $\mathcal{A}_{h}^{(a)}$:
\begin{equation}\label{ine:A:a:2}
    \begin{aligned}
\tau^{-1}\left\|\theta^{-1/2}e^{\tau\theta\varphi}\mathcal{A}_{h}^{(a)}y\right\|^{2}_{L_{h}^{2}(Q)}\geq &\tau^{-1}\sum_{i,j\in\llbracket 1,d\rrbracket}\int_{Q_{ij}^{\ast}} \theta^{-1}\gamma_{i}\gamma_{j}e^{2\tau\theta\varphi}|D^{2}_{ij}y|^{2}-\sum_{i\in\llbracket 1,d\rrbracket}\int_{Q_{i}^{\ast}} \tau\theta|\nabla\psi|^{2}_{\gamma}\partial_{i}\psi\gamma_{i}\,|D_{i}y|^{2}\\
        &-K(y),
           \end{aligned}
\end{equation}
with
\begin{align*}
    K(y):= &\sum_{i,j\in\llbracket 1,d\rrbracket}\int_{Q_{ij}^{\ast}}\left(s^{-1}(\mathcal{O}(h)+\mathcal{O}_{\lambda}((sh)^{2}))+s^{-1/2}\mathcal{O}_{\lambda}(1)\right)e^{2\tau\theta\varphi}\,|D_{ij}^{2}y|^{2}\\
    &+\sum_{j\in\llbracket 1,d\rrbracket}\int_{Q_{j}^{\ast}}\left(\tau^{1/2}\theta^{-1}\gamma_{j} |\nabla\psi|_{\gamma}^{2}+s^{-1/2}(\mathcal{O}_{\lambda}(sh)+s\mathcal{O}_{\lambda}(sh))\right)e^{2\tau\theta\varphi}\,|D_{j}y|^{2}\\
    &+\sum_{i\in\llbracket 1,d\rrbracket}\int_{Q_{i}^{\ast}} \left( \mathcal{O}_{\lambda}(1)+s\mathcal{O}_{\lambda}(sh)\right)e^{2\tau\theta\varphi}\,|D_{i}y|^{2}.
\end{align*}

Finally, for $\mathcal{A}_{h}^{(b)}$, using $D_{i}\gamma_{i}=\mathcal{O}(1)$ and Young's inequality, we have
\begin{align}\label{ine:A:b}
   \left\| \theta^{-1/2}e^{\tau\theta\varphi}\mathcal{A}_{h}^{(b)}y\right\|^{2}_{L_{h}^{2}(Q)} =&\int_{Q}\theta^{-1}e^{2\tau\theta\varphi}\left[\sum_{i\in\llbracket 1,d\rrbracket} D_{i}\gamma_{i}\,A_{i}D_{i}y\right]\left[\sum_{j\in\llbracket 1,d\rrbracket} D_{j}\gamma_{j}\,A_{j}D_{j}y\right]\notag\\
=&\sum_{i,j\in\llbracket 1,d\rrbracket}\int_{Q}\theta^{-1}e^{2\tau\theta\varphi} D_{i}\gamma_{i}\,A_{i}D_{i}y D_{j}\gamma_{j}\,A_{j}D_{j}y\notag\\
\leq & \sum_{i\in\llbracket 1,d\rrbracket}\int_{Q}\theta^{-1}e^{2\tau\theta\varphi} \mathcal{O}(1)\,|A_{i}D_{i}y|^{2}.
\end{align}
Therefore, recalling that $\mathcal{A}_{h}y:= \mathcal{A}_{h}^{(a)}y+\mathcal{A}_{h}^{(b)}y$, combining the estimates \eqref{ine:A:a:2} and \eqref{ine:A:b} yields
\begin{align*}
    \tau^{-1}\left\| \theta^{-1/2}e^{\tau\theta\varphi}\mathcal{A}_{h}y\right\|^{2}_{L_{h}^{2}(Q)}
     &\geq\frac{1}{2}\tau^{-1}\left\| \theta^{-1}e^{\tau\theta\varphi}\mathcal{A}_{h}^{(a)}y\right\|^{2}_{L_{h}^{2}(Q)}-\tau^{-1}\left\|\theta^{-1/2}e^{\tau\theta\varphi}\mathcal{A}_{h}^{(b)}y\right\|^{2}_{L_{h}^{2}(Q)}\\
     &\geq \tau^{-1}\sum_{i,j\in\llbracket 1,d\rrbracket}\int_{Q_{ij}^{\ast}} \theta^{-1}\gamma_{i}\gamma_{j}e^{2\tau\theta\varphi}|D^{2}_{ij}y|^{2}-\sum_{i\in\llbracket 1,d\rrbracket}\int_{Q_{i}^{\ast}} \tau\theta|\nabla\psi|^{2}_{\gamma}\partial_{i}\psi\gamma_{i}\,|D_{i}y|^{2}\\
     &-K(y).
\end{align*}

Hence, thanks to 
\begin{align*}
    U(y)+J_0(y)+K(y)+\sum_{i\in\llbracket 1,d\rrbracket}\tau\theta|\nabla\psi|^{2}\partial_{i}\gamma_{i}e^{2\tau\theta\varphi}|D_{i}y|^{2}\geq &\tau^{-1}\left\| \theta^{-1}e^{\tau\theta\varphi}\partial_{t}y\right\|^{2}_{L_{h}^{2}(Q)}+J_0(y)\\
    &+\tau^{-1}\sum_{i,j\in\llbracket 1,d\rrbracket}\int_{Q^\ast_{ij}}\theta^{-1}\gamma_{i}\gamma_{j}e^{2\tau\theta\varphi}|D_{ij}^{2}y|^{2},
\end{align*}
for $\tau$ large enough, we obtain
\begin{align*}
    U(y)+C\,J_0(y)\geq &I_0(y)+J_0(y),
\end{align*}
where
\begin{align*}
    I_0(y)= &\tau^{-1}\left\| \theta^{-1/2}e^{\tau\theta\varphi}\partial_{t}y\right\|^{2}_{L_{h}^{2}(Q)}+\tau^{-1}\sum_{i,j\in\llbracket 1,d\rrbracket}\int_{Q^\ast_{ij}}\theta^{-1}\gamma_{i}\gamma_{j}e^{2\tau\theta\varphi}|D_{ij}^{2}y|^{2}.
\end{align*}

The last inequality, together with \eqref{ine:new:carleman}, yields the Carleman estimate \eqref{theo:Carleman} for $q=0$.\\
Finally, the Carleman estimate for $q\in\mathbb{R},$ follows from the previous case after a suitable change of variable. In fact, by denoting $u=yv$, with $ v:= (\tau \theta(t))^q$, then  $$L(u):= \partial_{t}u -\sum_{i\in\llbracket 1,d\rrbracket}D_{i}(\gamma_{i}D_{i}u)=vg+yv_t,$$ and applying (\ref{ine:new:carleman}) to $u$, we have
\begin{align}\label{estima:Lema:carleman:01} 
   I_0(yv)+J_0(yv)\leq &C\left(\left\|e^{\tau\theta\varphi}L(u)\right\|^{2}_{L_{h}^{2}(Q)}+\int_{(0,T)\times\omega}\tau^3\theta^3e^{2\tau\theta\varphi}|yv|^2\right)\\
     &+Ch^{-2}\int_{\mathcal{W}} \left(\Big|(yv)|_{t=0}\Big|^2+\Big|(yv)|_{t=T}\Big|^2\right)e^{2\tau\theta(0)\varphi}.\nonumber
\end{align}
Thanks to the inequality $\displaystyle (a+b)^{2}\geq \frac{1}{2}a^{2}-b^{2}$ and noticing that $\theta$ verifies \eqref{eq:theta'} we obtain
\begin{align}\label{estima:Lema:carleman:02}
    \tau^{-1}\left\|\theta^{-1/2}e^{\tau \theta \varphi}\partial_{t}(yv)\right\|^{2}_{L_{h}^{2}(Q)}&= \tau^{-1}\left\|\theta^{-1/2}e^{\tau\theta\varphi}(v\partial_{t}y+y\partial_{t}v) \right\|^{2}_{L_{h}^{2}(Q)}\\
    &\geq \frac{1}{2}\left\|(\tau \theta)^{q-\frac{1}{2}} e^{\tau\theta\varphi}\partial_{t}y\right\|^{2}_{L_{h}^{2}(Q)}-T^2{q}^2\left\|(\tau \theta)^{q-\frac{1}{2}}\theta e^{\tau\theta\varphi}y\right\|^{2}_{L_{h}^{2}(Q)}.\notag
\end{align}
Then, replacing \eqref{estima:Lema:carleman:02} in \eqref{estima:Lema:carleman:01} and using  $L(u)=vg+yv_t$, we get
  \begin{align*}
   \frac{1}{2}I_{2q}(y)-T^2q^2&\left\|(\tau \theta)^{q-\frac{1}{2}}\theta e^{\tau\theta\varphi}y\right\|^{2}_{L_{h}^{2}(Q)}+ J_{2q}(y) 
   \\
   &\le C\left(\left\|(\tau\theta)^{q}e^{\tau\theta\varphi}g\right\|^{2}_{L_{h}^{2}(Q)}+\int_{(0,T)\times\omega}\tau^{3+2q}\theta^{3+2q}e^{2\tau\theta\varphi}|y|^2\right)\\
    &\phantom{\le}+CT^2{q}^2\left\|\tau^{q}\theta^{q+1} e^{\tau\theta\varphi}y\right\|^{2}_{L_{h}^{2}(Q)} \\
     &\phantom{\le}+Ch^{-2}\left(\frac{\tau}{T^{2}\delta}\right)^{2q}\int_{\mathcal{W}} \left(\Big|y|_{t=0}\Big|^2+\Big|y|_{t=T}\Big|^2\right)e^{2\tau\theta(0)\varphi}.
\end{align*}
By increasing the value of the parameter $\tau$, if necessary, the proof follows. 
\end{proof}

\begin{remark}
When the coefficients of the operator $\mathcal{A}_h$ are independent of time, the methodology used to establish the stability of the inverse problem requires only the case $q=0$ in Carleman's inequality. However, in the time-dependent case, $2q=1$ is also necessary; therefore we present \eqref{ine:carleman:semi-discrete} in that general form.
\end{remark}

We end this section with three technical lemmas. The first result, Lemma \ref{lem:T/2}, compares the value of $y$ at $t=T/2$ with the left-hand side of the Carleman estimate \eqref{ine:carleman:semi-discreteNew}. The main difference from the continuous setting is that in this case, there is an additional term at $t=0$ due to the Carleman weight function used in the semi-discrete parabolic operator. The second result, given by Lemma \ref{Appendix-Lema1}, will allow us to absorb the remaining terms in the proof of the stability Theorem \ref{theo:stability}. Finally, Lemma \ref{lem:energy:estimate} provides an energy estimate for the solution of \eqref{system:discrete}.
 
\begin{lemma}\label{lem:T/2} Assume $\tau>1$ is sufficiently large and $y$ is solution of \eqref{system:discrete}. Then, there exists $C>0$ such that, for $q\in\mathbb{R}$, and $t\in[0,T]$,
\begin{multline*}
    \int_{\mathcal{W}}\tau^{2q+1}\theta^{2q+1}\left(t\right)\left|y\left(t,x\right)\right|^2e^{2\tau\theta(t)\varphi(x)}
    \\
    \le C\left(I_{2q}(y)+J_{2q}(y)\right) +\int_{\mathcal{W}} \tau^{2q+1}\theta^{2q+1}(0)\left|y(0,x)\right|^2e^{2\tau\theta(0)\varphi(x)}.
\end{multline*}
\end{lemma}
\begin{proof}
 It suffices to note that, by using $|\theta_t|\le C\theta^2$,
\begin{align*}
\int_0^{t}\partial_t\left(\int_{\mathcal{W}} s^{2q+1}y^2e^{2s\varphi}\right)&
=\int_0^{t}\int_{\mathcal{W}}\left((2s^{2q+1}\tau\partial_{t}\theta\varphi +(2q+1)s^{2q}\tau\partial_{t}\theta)y^2+2s^{2q+1}y\partial_{t}y \right)e^{2s\varphi} \\
&\le C\int_Q(s^{2q+3}+s^{2q+2})y^2e^{2s\varphi}+\int_Q2\left(s^{\frac{2q-1}{2}}|\partial_{t}y |e^{s\varphi}\right)\left(s^{\frac{2q+3}{2}}|y|e^{s\varphi}\right)\\
&\le C\int_Qs^{2q+3}y^2e^{2s\varphi}+\int_Qs^{2q-1}|\partial_{t}y |^2e^{2s\varphi}+\int_Qs^{2q+3}|y|^2e^{2s\varphi},
    \end{align*}
  and the result follows from the definitions of $I_{2q}$ and $J_{2q}$. 
\end{proof}

\begin{corollary}\label{ine:Carleman:semi-discreteNew2}
Under the assumptions of Theorem \ref{theo:Carleman}, there exists $C>0$ such that for $q\in\mathbb{R}$, and $t\in [0,T]$,
\begin{multline*}
    \int_{\mathcal{W}}\tau^{2q+1}\theta^{2q+1}\left(t\right)\left|y\left(t,x\right)\right|^2e^{2\tau\theta(t)\varphi(x)}dx+I_{2q}(y)+J_{2q}(y)
    \\
    \leq  C\left(\int_Qe^{2\tau\theta\varphi}(\tau\theta)^{2q}|g|^2+\int_{(0,T)\times\omega}(\tau\theta)^{2q+3}e^{2\tau\theta\varphi}|y|^2\right) \\
    +Ch^{-2}\int_{\mathcal{W}} (\tau\theta(0))^{2q}\left(\Big|y|_{t=0}\Big|^2+\Big|y|_{t=T}\Big|^2\right)e^{2\tau\theta(0)\varphi}.
\end{multline*}
\end{corollary}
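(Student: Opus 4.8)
The plan is to obtain Corollary~\ref{ine:Carleman:semi-discreteNew2} simply by adding together the two estimates that are already available: the Carleman estimate \eqref{ine:carleman:semi-discreteNew} from Theorem~\ref{theo:Carleman} and the time-slice bound from Lemma~\ref{lem:T/2}. Both hold for $p=0,1$ under the same hypotheses, for a solution $y$ of \eqref{system:discrete}, so I would first state that the two inequalities are in force simultaneously and then combine them.

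More precisely, the first step is to invoke Lemma~\ref{lem:T/2}, which controls the weighted time-slice quantity $\int_{\mathcal{W}}\tau^{p+1}\theta^{p+1}(t)|y(t,x)|^2 e^{2\tau\theta(t)\varphi(x)}\,dx$ by $C(I_p(y)+J_p(y))$ plus the initial error term $\int_{\mathcal{W}}\tau^{p+1}\theta^{p+1}(0)|y(0,x)|^2 e^{2\tau\theta(0)\varphi(x)}\,dx$. The second step is to estimate the right-hand side of that bound from above by reusing Theorem~\ref{theo:Carleman}: the factor $I_p(y)+J_p(y)$ is itself dominated by the source term, the observation term on $\omega$, and the $h^{-2}$ boundary contribution appearing in \eqref{ine:carleman:semi-discreteNew}. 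The third step is to add the time-slice bound to \eqref{ine:carleman:semi-discreteNew} itself, so that the left-hand side becomes exactly the sum of the time slice, $I_p(y)$, and $J_p(y)$ displayed in the corollary. Collecting constants and absorbing the multiplicative factors into a single $C$ completes the argument.

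The one point requiring a little care is the treatment of the leftover initial term produced by Lemma~\ref{lem:T/2}, namely $\int_{\mathcal{W}}\tau^{p+1}\theta^{p+1}(0)|y(0,x)|^2 e^{2\tau\theta(0)\varphi(x)}\,dx$. This must be shown to be absorbable into the $h^{-2}$-weighted boundary term $Ch^{-2}\int_{\mathcal{W}}(\tau\theta(0))^p|y(0,x)|^2 e^{2\tau\theta(0)\varphi}\,dx$ already on the right-hand side of \eqref{ine:carleman:semi-discreteNew}. Using $\theta(0)\le (T^2\delta)^{-1}$ from \eqref{eq:theta} together with the standing constraint $\tau h(\delta T^2)^{-1}\le\varepsilon$, the extra power $\tau\theta(0)$ in the Lemma~\ref{lem:T/2} term is bounded by $\varepsilon h^{-1}$, hence by a constant multiple of $h^{-2}$; so this term is subsumed into the existing $h^{-2}$ boundary contribution without enlarging the right-hand side's structure. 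I expect this bookkeeping to be the only mildly nontrivial step; everything else is a direct linear combination of the two prior results.
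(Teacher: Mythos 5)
Your proposal is correct and follows exactly the route the paper intends: the paper's own proof is the one-line remark that the corollary ``is a consequence of \eqref{ine:carleman:semi-discreteNew}'', which implicitly means combining Lemma~\ref{lem:T/2} with Theorem~\ref{theo:Carleman} precisely as you do. Your bookkeeping for the leftover initial term is also sound, since $\tau\theta(0)\le \tau(\delta T^2)^{-1}\le \varepsilon h^{-1}\le \varepsilon h^{-2}$ under the standing constraint, so the term is absorbed into the $h^{-2}$ contribution as you claim.
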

\begin{proof}
   This follows directly from (\ref{ine:carleman:semi-discreteNew}).
\end{proof}

\begin{lemma}\label{Appendix-Lema1}
Assume $\tau_0>1$ is sufficiently large. Then, there exists $C>0$ such that, for $q\in\mathbb{R}$ fixed,    \begin{equation}\label{ine:varphi:T/2}
        \int_{Q}\tau^{2q}\theta^{2q}(t)\left|g\left(\frac{T}{2},x\right)\right|^2e^{2\tau\theta(t)\varphi(x)}\le C\tau^{2q-\frac{1}{2}}\int_{\mathcal{W}} \left|g\left(\frac{T}{2},x\right)\right|^2e^{2\tau\theta\left(\frac{T}{2}\right)\varphi(x)},\quad \forall\tau\geq \tau_0.
        \end{equation}
\end{lemma}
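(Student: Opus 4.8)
The inequality to prove is a purely time-integral estimate: on the left we integrate over $t\in(0,T)$ the factor $\tau^p\theta^p(t)e^{2\tau\theta(t)\varphi(x)}$ against a fixed spatial profile $|g(T/2,x)|^2$, and on the right the same spatial profile appears but with the time weight frozen at its maximal value $\theta(T/2)$. Since $\varphi(x)<0$ by \eqref{funcion-peso-2}, and $\theta(t)\le\theta(T/2)$ fails — in fact $\theta$ is \emph{minimized} at $t=T/2$ by \eqref{eq:theta} — the exponential $e^{2\tau\theta(t)\varphi}$ is in fact largest (closest to $1$) precisely at $t=T/2$ where $\theta$ is smallest. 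So the key point is that $\theta(t)\varphi(x)$, being a product of a positive decreasing-then-increasing function and a negative constant (in $x$), achieves its \emph{maximum over $t$} at $t=T/2$. The plan is therefore to separate the spatial sum from the time integral and reduce everything to a one-dimensional estimate on the scalar function $t\mapsto \theta^p(t)e^{2\tau\theta(t)\varphi(x)}$.

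\textbf{Key steps.} First I would write the left-hand side, using the discrete integral notation, as
\[
\int_Q \tau^p\theta^p(t)\,|g(T/2,x)|^2\,e^{2\tau\theta(t)\varphi(x)}
= h^d\sum_{x\in\mathcal{W}}|g(T/2,x)|^2\,\tau^p\int_0^T \theta^p(t)\,e^{2\tau\theta(t)\varphi(x)}\,dt,
\]
so the problem decouples pointwise in $x$ and it suffices to bound, for each fixed $x$ (hence each fixed value $\varphi=\varphi(x)<0$),
\[
\int_0^T \theta^p(t)\,e^{2\tau\theta(t)\varphi}\,dt \;\le\; C\,\tau^{-1/2}\,\theta^p\!\left(\tfrac T2\right)e^{2\tau\theta(T/2)\varphi}.
\]
Second, I would perform a Laplace-type (steepest-descent) analysis of this integral: the exponent $2\tau\theta(t)\varphi$ is maximized at $t=T/2$ (the unique minimum of $\theta$), and near that point $\theta(t)-\theta(T/2)\sim c\,(t-T/2)^2$ since $\theta'(T/2)=0$ by \eqref{eq:theta'} and $\theta''(T/2)>0$. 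Changing variables and using the standard Gaussian integral $\int e^{-a(t-T/2)^2}\,dt\sim \sqrt{\pi/a}$ with $a\asymp \tau|\varphi|$ produces the gain of a factor $\tau^{-1/2}$, which is exactly the power appearing on the right-hand side; the remaining factor $\theta^p$ is continuous and bounded, contributing only to the constant $C$. Third, I would handle uniformity: because $\varphi(x)$ ranges over a compact set of strictly negative values as $x$ runs over $\overline{\mathcal{W}}$, all implied constants can be taken independent of $x$ (hence of the mesh), and $\tau_0$ is chosen large enough that the asymptotic regime is valid and the tails away from $t=T/2$ are negligible.

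\textbf{Main obstacle.} The delicate part is making the Laplace asymptotics rigorous and, above all, \emph{uniform in $x$} while extracting precisely the power $\tau^{p-1/2}$ rather than some other fractional power. The width of the Gaussian peak is of order $(\tau|\varphi|)^{-1/2}$, so one must control both the Gaussian contribution near $t=T/2$ and the exponentially small tails, and verify that the prefactor $\theta^p(t)$ may be replaced by its value $\theta^p(T/2)$ at the cost of an absorbable error. Since $|\varphi(x)|$ is bounded below away from zero on $\overline{\mathcal{W}}$ (as $\varphi<0$ on the closed set), this lower bound furnishes the uniform $\tau^{-1/2}$ gain; the cleanest route avoiding fine asymptotics is to bound the integral directly by comparing $\theta(t)$ to its quadratic lower expansion and dominating by a full Gaussian integral over $\mathbb{R}$, which yields the factor $\tau^{-1/2}$ with an explicit constant and requires no tail estimate at all.
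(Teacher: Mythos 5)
Your proposal follows essentially the paper's own route: the proof there likewise reduces to the scalar time integral $\int_0^T\theta^p(t)e^{2s(t)\varphi(x)}\,dt$ for each fixed $x$, derives the global quadratic lower bound $\theta(t)\geq\theta\left(\tfrac T2\right)+T^{-2}\left(t-\tfrac T2\right)^2$ from $\theta'\left(\tfrac T2\right)=0$ and $\theta''(t)\geq 2\theta^2\left(\tfrac T2\right)\geq 2/T^2$ by integrating twice, and then dominates by a full Gaussian integral over $\mathbb{R}$ with rate proportional to $(\tau-1)\mu_0/T^2$, where $\mu_0:=\inf|\varphi|>0$ supplies the uniformity in $x$ you ask for. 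This yields the factor $\tau^{-1/2}$ with no stationary-phase asymptotics and no tail estimate at all --- exactly the ``cleanest route'' you identify at the end, so the heavier Laplace machinery in your second step is unnecessary.

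One step of your sketch would fail as written, however: the claim that the prefactor $\theta^p(t)$ ``is continuous and bounded, contributing only to the constant $C$.'' The constant in the lemma must be uniform in $\delta$ --- in the proof of Theorem \ref{theo:stability} the lemma is applied with $p=1$ and $\delta$ is ultimately taken proportional to $h$ --- whereas $\sup_{t\in[0,T]}\theta^p(t)=\theta^p(0)\approx(\delta T^2)^{-p}\to\infty$ as $\delta\to 0$ when $p>0$; concretely, for $p=1$, discarding the whole exponential near the endpoints leaves a contribution of order $\int_0^T\theta\,dt\approx T^{-1}\log(1/\delta)$, which cannot be absorbed into a $\delta$-independent $C$. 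The paper's fix is to spend only $\tau-1$ of the $\tau$ on the quadratic bound, writing $s(t)\varphi=\theta(t)\varphi+(\tau-1)\theta(t)\varphi$, so that a factor $e^{2\theta(t)\varphi(x)}\leq e^{-2\mu_0\theta(t)}$ survives in the integrand; since $r\mapsto r^pe^{-2\mu_0 r}$ is bounded on $\left[\theta\left(\tfrac T2\right),\infty\right)$ uniformly in $\delta$, this absorbs $\theta^p(t)$ precisely where $\theta$ blows up. Your argument admits the same repair (retain part of the exponential in the tails rather than bounding $\theta^p$ by a constant), and with that modification it coincides with the paper's proof; note also that the leftover factor $\theta^p\left(\tfrac T2\right)$ in your pointwise bound is harmless, since $T^{-2}\leq\theta\left(\tfrac T2\right)\leq 4T^{-2}$ uniformly in $\delta$.
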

\begin{remark}
    The above estimate is crucial to control certain terms from the right-hand side in the proof of the stability estimate \eqref{theo:stability}. In particular, note that for $2q=1$ we recover the estimate $(3.17)$ in \cite{IY-1998}, which is the estimate that works in that paper.
\end{remark}
\begin{proof}
    First, from \eqref{eq1} and \eqref{eq:theta'} we have $\theta'\left(\frac{T}{2}\right)=0$. Moreover, from (\ref{theta-delta}), for $t\in[0,T]$,
    \[ \theta'(t)=2\left(t-\frac{T}{2}\right)\theta^2(t)=\frac{2\left(t-\frac{T}{2}\right)}{(t+\delta T)^2(T+\delta T-t)^2}, \]
    and
    \begin{align*}
        \theta''(t)=2\theta^2(t)+8\left(t-\frac{T}{2}\right)^2\theta^3(t)\geq 2\theta^2\left(\frac{T}{2}\right).
    \end{align*}
    Since $\delta<\frac{1}{2}$, we obtain $\theta''(t)\geq \frac{2}{T^2}$. Then by integrating twice in time yields \[ \theta(t)\ge \frac{1}{T^2}\left(t-\frac{T}{2}\right)^2+\theta\left(\frac{T}{2}\right). \]

    Namely, from \eqref{eq1}, \eqref{funcion-peso-2} and $\tau>1$ we get
    \begin{align*}
        (\tau-1)\theta(t)\varphi(x)&\le \tau \theta\left(\frac{T}{2}\right)\varphi(x)-\theta\left(\frac{T}{2}\right)\varphi(x)+\frac{\varphi(x)}{T^2} (\tau-1)\left(t-\frac{T}{2}\right)^2,
    \end{align*}
    and therefore \begin{align*}
        s(t)\varphi(x)&\le \theta(t)\varphi(x)+s\left(\frac{T}{2}\right)\varphi(x)+\theta\left(\frac{T}{2}\right)\mu_1-\frac{\mu_0}{T^2} (\tau-1)\left(t-\frac{T}{2}\right)^2,
    \end{align*}
    where $\mu_1:= \sup |\varphi|$ and $\mu_0:= \inf|\varphi|$ are  positive constants.

    Hence
    \begin{align*}
        \int_0^T\theta^{2q}(t)e^{2s(t)\varphi(x)}dt&\le e^{2s\left(\frac{T}{2}\right)\varphi(x)}e^{2\theta\left(\frac{T}{2}\right)\mu_1}\int_0^T\theta^{2q}(t)e^{2\theta(t)\varphi(x)}e^{\left(-2(\tau-1)\frac{\mu_0}{T^2}\left(t-\frac{T}{2}\right)^2\right)}dt \\
        &\le Ce^{2s\left(\frac{T}{2}\right)\varphi(x)}\int_0^T\theta^{2q}(t)e^{-2\theta(t)\mu_0}e^{\left(-2(\tau-1)\frac{\mu_0}{T^2}\left(t-\frac{T}{2}\right)^2\right)}dt
        \\
        &\le Ce^{2s\left(\frac{T}{2}\right)\varphi(x)}\int_0^Te^{\left(-2(\tau-1)\frac{\mu_0}{T^2}\left(t-\frac{T}{2}\right)^2\right)}dt
        \\&
        \le Ce^{2s\left(\frac{T}{2}\right)\varphi(x)}\int_{-\infty}^{+\infty}e^{\left(-2(\tau-1)\frac{\mu_0}{T^2}\mu^2\right)}d\mu\\
        &\le C\frac{Te^{2s\left(\frac{T}{2}\right)\varphi(x)}}{\sqrt{2\mu_0(\tau-1)}}\int_{-\infty}^{+\infty}e^{-\eta^2}d\eta \\
        &\le C\frac{e^{2s\left(\frac{T}{2}\right)\varphi(x)}}{\sqrt{\tau}},
    \end{align*}
which, after multiplying by $\left|g\left(\frac{T}{2},x\right)\right|^2$ and integrating in ${\mathcal{W}}$ proves the Lemma.
\end{proof}
We end this section by proving an energy estimate that will be useful in the next section.
\begin{lemma}\label{lem:energy:estimate}
    Let $y$ be the solution of the system
    \begin{equation}\label{system:semi-discrete}
\begin{cases}
    \partial_{t}y(t,x)-\mathcal{A}_{h}y(t,x)=g(t,x),\quad &(t,x)\in(0,T)\times \mathcal{W},\\
    y(t,x)=0,  &(t,x)\in(0,T)\times\partial\mathcal{W}.
\end{cases}
\end{equation}
Then, for any $T_{0}\in(0,T)$,
\begin{align}\label{ine:energy:T0}
        \int_\mathcal{W} |y|^2(t,x)
        \leq e^{\tilde{C}(t-T_{0})}\left(\int_\mathcal{W} |y|^2(T_{0},x)+\int_{T_{0}}^{t}\int_{\mathcal{W}} |g|^2\right),
\end{align}
for any $t\in (T_{0},T)$, with $\tilde{C}:= \frac{d}{2}\mathrm {reg}(\Gamma)\|b\|^2_\infty+\left\| c\right\|_{\infty}+\frac{1}{2}$, where $\|b\|_\infty^2:= \max\limits_{i\in\{1,...,d\}}\|b_i\|^2_\infty$.
\end{lemma}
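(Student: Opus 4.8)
The plan is to run the standard $L^2_h$ energy method directly on \eqref{system:semi-discrete}, multiplying the equation by $y$ and integrating over $\mathcal{W}$. Differentiating the energy gives
\[
\tfrac{1}{2}\tfrac{d}{dt}\int_\mathcal{W}|y|^2=\int_\mathcal{W} y\,\partial_t y=\int_\mathcal{W} y\,\mathcal{A}_h y+\int_\mathcal{W} y\,g,
\]
so the whole task reduces to bounding $\int_\mathcal{W} y\,\mathcal{A}_h y$ from above. I would split $\mathcal{A}_h$ into its diffusion, convection and zeroth-order pieces and treat them separately, aiming to produce a dissipative term from the diffusion that absorbs the convection.

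For the diffusion term I would apply the discrete integration-by-parts formula \eqref{eq:int:dif} with $u=y$ and $v=\gamma_i D_i y$, using that $y=0$ on $\partial\mathcal{W}$ to kill every boundary contribution, obtaining $\int_\mathcal{W} y\,D_i(\gamma_i D_i y)=-\int_{\mathcal{W}_i^\ast}\gamma_i|D_i y|^2\le 0$ since $\gamma_i>0$. This nonpositive term is the dissipation I will exploit. The zeroth-order term is immediate, $-\int_\mathcal{W} c\,|y|^2\le\|c\|_\infty\int_\mathcal{W}|y|^2$.

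The crux is the convection term $-\int_\mathcal{W} y\,b_i D_i A_i y$. Since $D_i$ and $A_i$ commute (both are polynomials in the shifts $\tau_{\pm i}$), I would bound it by $\int_\mathcal{W}|b_i|\,|y|\,|A_i D_i y|$ and apply Young's inequality with weight $\mbox{reg}(\Gamma)$,
\[
\int_\mathcal{W}|b_i|\,|y|\,|A_i D_i y|\le\frac{\mbox{reg}(\Gamma)}{2}\|b_i\|_\infty^2\int_\mathcal{W}|y|^2+\frac{1}{2\,\mbox{reg}(\Gamma)}\int_\mathcal{W}|A_i D_i y|^2.
\]
To feed the last term back into the dissipation I would use the averaging inequality \eqref{eq:promedioinequality}, $|A_i D_i y|^2\le A_i(|D_i y|^2)$, together with \eqref{eq:int:ave} applied with constant $u\equiv1$ (whose boundary contribution carries the favorable sign), to get $\int_\mathcal{W}|A_i D_i y|^2\le\int_{\mathcal{W}_i^\ast}|D_i y|^2$. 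The regularity bound gives $\gamma_i\ge 1/\mbox{reg}(\Gamma)$, so $\frac{1}{2\,\mbox{reg}(\Gamma)}\int_\mathcal{W}|A_i D_i y|^2\le\frac12\int_{\mathcal{W}_i^\ast}\gamma_i|D_i y|^2$, that is, exactly half of the available dissipation in direction $i$. Summing over $i$ leaves the surviving diffusion nonpositive and collects the $|y|^2$-coefficient into $\frac{d}{2}\,\mbox{reg}(\Gamma)\|b\|_\infty^2+\|c\|_\infty$.

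Finally I would estimate the source by $\int_\mathcal{W} y\,g\le\frac12\int_\mathcal{W}|y|^2+\frac12\int_\mathcal{W}|g|^2$, producing the remaining $\tfrac12$ in $\tilde C$ and the $\tfrac12\int_\mathcal{W}|g|^2$ term. Assembling the estimates and discarding the nonpositive dissipation yields the differential inequality
\[
\frac{d}{dt}\int_\mathcal{W}|y|^2(t)\le 2\tilde C\int_\mathcal{W}|y|^2(t)+\int_\mathcal{W}|g|^2(t),
\]
after which a Gronwall argument on $[T_0,t]$ delivers the exponential bound \eqref{ine:energy:T0}. I expect the only genuinely delicate step to be the convection estimate: tracking the discrete averaging and difference operators across the shifted grids $\mathcal{W}$ and $\mathcal{W}_i^\ast$, and verifying that the boundary terms generated by \eqref{eq:int:dif} and \eqref{eq:int:ave} either vanish (from $y=0$ on $\partial\mathcal{W}$) or carry the correct sign, so that the convection is absorbed by the dissipation with precisely the constant $\tilde C$.
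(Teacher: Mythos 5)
Your proposal is correct and takes essentially the same route as the paper's proof: the same $L^2_h$ energy identity with discrete integration by parts \eqref{eq:int:dif} for the diffusion, Young's inequality with weight $\mbox{reg}(\Gamma)$ on the convection term, absorption into half the dissipation via \eqref{eq:promedioinequality} and \eqref{eq:int:ave} with the favorable sign of the boundary term, and a Gronwall argument, with your per-direction Young weight matching the paper's choice $\epsilon=\left(\mbox{reg}(\Gamma)\|b\|_\infty^2\right)^{-1}$. The only blemish is that your differential inequality $\frac{d}{dt}\int_{\mathcal{W}}|y|^2\le 2\tilde{C}\int_{\mathcal{W}}|y|^2+\int_{\mathcal{W}}|g|^2$ honestly yields the exponent $2\tilde{C}(t-T_0)$ rather than $\tilde{C}(t-T_0)$, but the paper's own proof contains the same factor-of-two slip when multiplying by $e^{-\tilde{C}t}$, so this is immaterial to the argument.
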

\begin{proof} Recalling that 
\begin{equation}
\mathcal{A}_{h}y:= \sum_{i=1}^{d}D_{i}\left(\gamma_{i}(t,x)D_{i}y(t,x)\right)-\sum_{i=1}^{d}b_i(t,x)D_{i}A_{i}y(t,y)-c(t,x)y(t,x),
\end{equation}
by multiplying the main equation of system \eqref{system:semi-discrete} by $y$, integrating over $\mathcal{W}$, and after integration by parts (see \eqref{eq:int:dif}) we have
\begin{align}\label{eq:energy:1}
\frac{\partial}{\partial t}\int_\mathcal{W} \frac{|y|^2}{2}+\sum_{i=1}^d\int_{\mathcal{W}_i^\ast} \gamma_i|D_iy|^2&=\int_\mathcal{W} g y-\sum_{i=1}^d\int_{\mathcal{W}} b_i(A_iD_iy)\,y-\int_{\mathcal{W}}c|y|^2,
\end{align}
where we have used that $y=0$ on the boundary $\partial\mathcal{W}$. Moreover, using that the coefficients $c,b_i$ are bounded, and applying Young's inequality to the right-hand side of \eqref{eq:energy:1} we obtain
        \begin{align*}
        \frac{\partial}{\partial t}\int_\mathcal{W} \frac{|y|^2}{2}+\sum_{i=1}^d\int_{\mathcal{W}_i^\ast} \gamma_i|D_{i}y|^{2}\leq\frac{1}{2}\int_\mathcal{W}|g|^{2}+\sum_{i=1}^d\int_{\mathcal{W}}\frac{\epsilon}{2}\left\| b_{i}\right\|_{\infty}^{2}|A_{i}D_{i}y|^{2}+\int_{\mathcal{W}}\left( \frac{d}{2\epsilon}+\left\| c\right\|_{\infty}+\frac{1}{2}\right)|y|^{2}. 
    \end{align*}
    Let us focus on the integral of the right-hand side with the term $|A_{i}D_{i}y|^{2}$. First, thanks to the inequality \eqref{eq:promedioinequality} and the integration by parts for the average operator \eqref{eq:int:ave} we obtain
    \begin{equation*}
        \frac{\partial}{\partial t}\int_\mathcal{W} \frac{|y|^2}{2}+\sum_{i=1}^d\int_{\mathcal{W}_i^\ast} \gamma_i|D_{i}y|^{2}\leq\frac{1}{2}\int_\mathcal{W}|g|^{2}+\sum_{i=1}^d\int_{\mathcal{W}^{\ast}_{i}}\frac{\epsilon}{2}\left\| b_{i}\right\|_{\infty}^{2}|D_{i}y|^{2}+\int_{\mathcal{W}}\left(\frac{d}{2\epsilon}+\left\| c\right\|_{\infty}+\frac{1}{2}\right)|y|^{2},
    \end{equation*}
since the boundary term is positive. Second, if $\epsilon:= \frac{1}{\mbox {reg}(\Gamma)\|b\|^2_\infty}>0$, it follows 
       \begin{equation*}
        \frac{\partial}{\partial t}\int_\mathcal{W} \frac{|y|^2}{2}\leq\frac{1}{2}\int_\mathcal{W}|g|^{2}+\tilde{C}\int_{\mathcal{W}}|y|^{2},
    \end{equation*}
    with $\tilde{C}:= \frac{d}{2}\mbox {reg}(\Gamma)\|b\|^2_\infty+\left\| c\right\|_{\infty}+\frac{1}{2}$. Finally, multiplying by $e^{-\tilde{C}t}$ the previous inequality we have
        \begin{equation*}
        \frac{\partial}{\partial t}\left( e^{-\tilde{C}t}\int_\mathcal{W} \frac{|y|^2}{2}\right)\leq e^{-\tilde{C}t}\int_\mathcal{W}|g|^{2},
    \end{equation*}
    and the result follows after integrating over the interval $(T_{0},t)$.
\end{proof}
\begin{remark}
    When $b_{i}=0$ for all $i\in\{1,\ldots,d\}$, the inequality \eqref{ine:energy:T0} holds with $\tilde{C}=\|c\|_{\infty}+\frac{1}{2}$.
\end{remark}

%%%%%%%%%%%%%%%%%%%%%%%%
%%%%%%%%%%%%%%%%%%%%%%%%%%

\section{An inverse problem for the semi-discrete parabolic operator}\label{sec:inverse:problem}
This section is devoted to the proof of Theorem \ref{theo:stability}, which establishes a stability estimate for the right-hand side $g$ of the system \eqref{system:discrete} in terms of the solution $y$, its derivative $\partial_{t}y$ observed in a subset $\omega$, and the measurement at time $\vartheta=T/2$.

\begin{proof}[Proof of Theorem \ref{theo:stability}] Let $y$ be solution of system \eqref{system:discrete}. Then $z(t,x)=\partial_{t}y(t,x)$ satisfies the following system 
    \begin{align}\label{system:z}
        \begin{cases}
        \partial_{t}z(t,x)-\mathcal{A}_{h}z(t,x)=\mathcal{B}_{h}y(t,x)+\partial_{t}g(t,x),& (t,x)\in(0,T)\times\mathcal{W},\\
        z(t,x)=0, &  x\in (0,T)\times\partial\mathcal{W},\\
        z(T/2,x)=\mathcal{C}_hy(T/2,x)+g(T/2,x),& x\in\mathcal{W},
        \end{cases}
    \end{align}
    where 
\begin{equation*}
\mathcal{A}_{h}z(t,x):= \sum_{i\in\llbracket 1,d\rrbracket}D_{i}\left(\gamma_{i}(t,x)D_{i}z(t,x)\right)-b_{i}(t,x)D_{i}A_{i}z(t,x)-c(t,x)z(t,x),
\end{equation*}
\begin{align*}
\mathcal{B}_{h}y(t,x):= &\sum_{i\in\llbracket 1,d\rrbracket}D_{i}(\partial_{t}\gamma_{i}D_{i}y)-\partial_{t}b_{i}(t,x)D_{i}A_{i}y(t,x)-\partial_{t}c(t,x)y(t,x),\\
\mathcal{C}_{h}y_0(x):= &\sum_{i\in\llbracket 1,d\rrbracket}D_{i}\left(\gamma_{i}\left(\frac{T}{2},x\right)D_{i}y_0(x)\right)-b\left(\frac{T}{2},x\right)D_{i}A_{i}y_0(x)-c\left(\frac{T}{2},x\right)y_0(x),
\end{align*}
and we denote $y_0(x):= y(T/2,x).$
Thanks to the Carleman estimate in Corollary \ref{ine:Carleman:semi-discreteNew2} with $q=0$, and by making $t=T/2$, we get
    \begin{align}\label{ine:2}
        I_{0}(z)+J_{0}(z)+&s\left(T/2\right)\left\|e^{\tau\theta(T/2)\varphi}z|_{t=T/2}\right\|_{L^2_h(\mathcal{W})}^2\\ 
		&\leq  C\left(\int_{Q}e^{2\tau\theta\varphi}(|\partial_{t}g|^{2}+|\mathcal{B}_hy|^2)+\int_{Q_{\omega}}(\tau\theta)^{3}e^{2\tau\theta\varphi}|z|^2\right)\notag\\
		&\phantom{\le}+Ch^{-2}\int_{\Omega} \left(\Big|z|_{t=0}\Big|^2+\Big|z|_{t=T}\Big|^2\right)e^{2\tau\theta(0)\varphi},\notag
    \end{align}
     for any $\tau\geq \tau_{0}(T+T^{2})$, $0<h\leq h_{0}$, $0<\delta \leq 1/2$, $\tau h(\delta T^{2})^{-1}\leq \varepsilon$.
     
  Now, we observe that
  \begin{equation}\label{ine:bound:B}
        |\mathcal{B}_{h}y|\leq \tilde{C}\left(\sum_{i\in\llbracket 1,d\rrbracket}|D_{i}^{2}y|+|D_{i}A_{i}y|+|y|\right),
    \end{equation} From inequality \eqref{ine:Carleman:semi-discreteNew2} with $2q=1$, the solution $y$ of the system \eqref{system:discrete} verifies
  \begin{equation*}
      I_1(y)+J_{1}(y)\leq C\int_{Q}\tau\theta |g|^{2}e^{2s\varphi}+\int_{Q_{ \omega}}\tau^{4}\theta^{4} \varphi^{4}|y|^{2}e^{2s\varphi}+\frac{C}{h^{2}}\int_{\mathcal{W}}\tau\theta(0)\left(\Big|y|_{t=0}\Big|^{2}+\Big|y|_{r=T}\Big|^{2}\right)e^{2\tau\theta(0)\varphi}.
  \end{equation*}
  
Thus, substituting the above estimate into the right-hand side of \eqref{ine:2} and, if necessary, increasing the parameter $\tau$, we obtain
\begin{align}
I_{0}(z)+J_{0}(z)+&s\left(T/2\right)\left\|e^{\tau\theta(T/2)\varphi}z|_{t=T/2}\right\|_{L^2_h(\mathcal{W})}^2\notag\\
\leq& C\left(\int_{Q}\left[|\partial_{t}g|^{2}+s|g|^{2}\right]e^{2s\varphi} \right)+C\int_{Q_{\omega}}s^{3}|z|^{2}e^{2s\varphi}+C\int_{Q_{\omega}}s^{4}|y|^{2}e^{2s\varphi}\notag\\
&+Ch^{-2}\int_{\mathcal{W}}\left(\Big|z|_{t=0}\Big|^{2}+\Big|z|_{t=T}\Big|^{2}\right)e^{2\tau\theta(0)\varphi}\label{ine:z:y}\\
&+\frac{C\tau\theta(0)}{h^{2}}\int_{\mathcal{W}}\left(\Big|y|_{t=0}\Big|^{2}+\Big|y\Big|_{t=T}|^{2}\right)e^{2\tau\theta(0)\varphi}\notag.
\end{align}
Moreover, using the assumption \eqref{assump:g} it follows that there exists a constant $C>0$ such that
     \begin{equation*}
\int_{Q}\left(|\partial_{t}g|^{2}+s|g|^{2}\right)e^{2s\varphi}\leq C\int_{Q}s\Big|g|_{t=T/2}\Big|^{2}e^{2s\varphi}\, \mbox{ for all }(t,x)\in Q \mbox{ and }\tau\geq \tau_{0}.
     \end{equation*}
Thus, using the above estimate in \eqref{ine:z:y} 
    and from Lemma \ref{Appendix-Lema1} with $2q=1$, we get
    \begin{equation}\label{ine:y:z:2}
        \begin{aligned}
         I_{0}(z)+J_{0}(z)+&s\left(T/2\right)\left\|e^{\tau\theta(T/2)\varphi}z|_{t=T/2}\right\|_{L^2_h(\mathcal{W})}^2\\
         \leq&   C\sqrt{\tau}\left(\int_{{\mathcal{W}}}\left|g\left( \frac{T}{2},x\right)\right|^{2}e^{2\tau\theta(T/2)\varphi(x)} \right)+C\int_{Q_{\omega}}s^{3}e^{2s\varphi}|z|^{2}+C\int_{Q_{ \omega}}s^{4}|y|^{2}e^{2s\varphi}\\
&+Ch^{-2}\int_{\mathcal{W}}\left(\Big|z|_{t=0}\Big|^{2}+\Big|z|_{t=T}\Big|^{2}\right)e^{2\tau\theta(0)\varphi}\\
&+C\tau\theta(0)h^{-2}\int_{\mathcal{W}}\left(\Big|y|_{t=0}\Big|^{2}+\Big|y|_{t=T}\Big|^{2}\right)e^{2\tau\theta(0)\varphi}.
        \end{aligned}
    \end{equation}
\par On the other hand, recalling that $z(T/2,x)=\mathcal{C}_{h}y_{0}(x)+g(T/2,x)$ 
 and by the definition of $\mathcal{C}_{h}$ we get 
  \begin{equation}\label{ine:almost}
  \begin{aligned}
    \left\|e^{\tau\theta(T/2)\varphi}z|_{t=T/2}\right\|_{L^2_h(\mathcal{W})}^2 \geq &-C\int_{\mathcal{W}}|\mathcal{D}y_{0}|^2\,e^{2\tau\theta(T/2)\varphi(x)}\\
   &+C\int_{\mathcal{W}}\left| g\left( \frac{T}{2},x\right)\right|^{2}e^{2\tau\theta(T/2)\varphi(x)},
   \end{aligned}
  \end{equation}
  where $|\mathcal{D} y_{0}|^2:= \sum_{i\in\llbracket 1,d\rrbracket} |D_{i}^{2}y_{0}|^{2}+|D_{i}A_{i}y_{0}|^{2}+|y_{0}|^{2}$.

Combining \eqref{ine:y:z:2}, \eqref{ine:almost},
and increasing $\tau$ if necessary, we can absorb the term \break $\|g(T/2)e^{\tau\theta(T/2)\varphi}\|^{2}_{L^{2}_{h}(\mathcal{W})}$ from the right-hand side to obtain 
\begin{equation}\label{ine:g}
\begin{aligned}
    s(T/2)\|e^{\tau\theta(T/2)\varphi}g|_{t=T/2}\|^{2}_{L^{2}_{h}(\mathcal{W})}\leq &Cs(T/2)\int_{\mathcal{W}}|\mathcal{D}y_{0}|^2\,e^{2\tau\theta(T/2)\varphi(x)}\\
    &+C\int_{Q_{\omega}}s^{3}e^{2s\varphi}|z|^{2}+C\int_{Q_{ \omega}}s^{4}|y|^{2}e^{2s\varphi}\\
&+Ch^{-2}\int_{\mathcal{W}}\left(\Big|z|
_{t=0}\Big|^2+\Big|z|_{t=T}\Big|^{2}\right)e^{2\tau\theta(0)\varphi}\\
&+C\tau\theta(0)h^{-2}\int_{\mathcal{W}}\left(\Big|y|_{t=0}\Big|^{2}+\Big|y|_{t=T}\Big|^{2}\right)e^{2\tau\theta(0)\varphi}.
\end{aligned}
\end{equation}
Note that 
\begin{equation}\label{eq:exp:bound}
    \exp\left( 2\tau\theta(0)\varphi(x)\right)=\exp\left( 2\tau\theta(T)\varphi(x)\right)\leq \exp\left(\frac{-C\tau}{\delta T^{2}} \right),
\end{equation} since $\theta(0)=\theta(T)\leq (\delta T^{2})^{-1}$ and $\sup \varphi <0$. Analogously, we have
\begin{equation}\label{eq:exp:bound:2}
\exp\left(2\tau\theta(T/2)\varphi\right)\geq \exp\left(-C'\frac{\tau}{T^{2}}\right),
\end{equation} 
where we have used that $\varphi(x) <0$ and $\theta(T/2)=\frac{4}{T^{2}(1+2\delta)^{2}}\leq \frac{4}{T^{2}}$. Thus, by using \eqref{eq:exp:bound} to estimate terms on the right-hand side of \eqref{ine:g}, and \eqref{eq:exp:bound:2} for the left-hand side, we arrive to
\begin{equation}\label{ine:g:2}
\begin{aligned}
    \tau\int_{\mathcal{W}}\left|g\left( \frac{T}{2},x\right)\right|^{2}\leq &C\tau e^{\frac{C''\tau}{T^{2}}}\left\| y_{0}\right\|^{2}_{H_h^{2}(\mathcal{W})}+Ce^{\frac{C''\tau}{T^{2}}}\int_{Q_{\omega}}s^{3}e^{2s\varphi}|z|^{2}+Ce^{\frac{C''\tau}{T^{2}}}\int_{Q_{ \omega}}s^{4}|y|^{2}e^{2s\varphi}\\
&+Ch^{-2}e^{-\frac{C''\tau}{\delta T^{2}}}\left(\|z|_{t=0}\|_{L^2_h(\mathcal{W})}^2+\|z|_{t=T}\|_{L^2_h(\mathcal{W})}^2\right)\\
&+C\tau\theta(0)h^{-2}e^{\frac{-C''\tau}{\delta T^{2}}}\left(\|y|_{t=0}\|_{L^2_h(\mathcal{W})}^2+\|y|_{t=T}\|_{L^2_h(\mathcal{W})}^2\right).
\end{aligned}
\end{equation}

Finally, applying Lemma \ref{lem:energy:estimate} to \eqref{ine:g:2} for the solutions of systems \eqref{system:z} and \eqref{system:discrete}, respectively, yields
\begin{equation}\label{ine:g:3}
\begin{aligned}
    \tau\int_{\mathcal{W}}\left|g\left( \frac{T}{2},x\right)\right|^{2}\leq &C\tau e^{\frac{C''\tau}{T^{2}}}\left\| y_{0}\right\|^{2}_{H^{2}_{h}(\mathcal{W})}+Ce^{\frac{C''\tau}{T^{2}}}\int_{Q_{\omega}}s^{3}e^{2s\varphi}|z|^{2}+Ce^{\frac{C''\tau}{T^{2}}}\int_{Q_{ \omega}}s^{4}|y|^{2}e^{2s\varphi}\\
&+Ch^{-2}e^{-\frac{C''\tau}{\delta T^{2}}}\left(\|z|_{t=0}\|_{L^2_h(\mathcal{W})}^2+\int_{0}^{T}\int_{\mathcal{W}}|\mathcal{B}_{h}y_{0}+\partial_{t}g|^{2}\right)\\
&+C\tau\theta(0)h^{-2}e^{\frac{-C''\tau}{\delta T^{2}}}\left(\|y|_{t=0}\|_{L^2_h(\mathcal{W})}^2+\int_{0}^{T}\int_{\mathcal{W}}|g|^{2}\right).
\end{aligned}
\end{equation}

Let $\tau_1>0$ be such that $\tau\geq \tau_1$. Then $e^{-\frac{C''\tau}{\delta T^{2}}}\leq e^{-\frac{C''\tau_1}{\delta T^{2}}}$.Choosing $\delta$ small enough so that $\frac{\tau_1}{T^{2}\delta}=\frac{\varepsilon_{0}}{h}$, we obtain
\begin{equation*}%\label{ine:g:3}
\begin{aligned}
    \tau\int_{\mathcal{W}}\left|g\left( \frac{T}{2},x\right)\right|^{2}\leq &C\tau e^{\frac{C''\tau}{T^{2}}}\left\| y_{0}\right\|^{2}_{H^{2}_{h}(\mathcal{W})}+Ce^{\frac{C''\tau}{T^{2}}}\int_{Q_{\omega}}s^{3}e^{2s\varphi}|z|^{2}+Ce^{\frac{C''\tau}{T^{2}}}\int_{Q_{ \omega}}s^{4}|y|^{2}e^{2s\varphi}\\
&+Ce^{-\frac{C''}{h}}\left(\|z|_{t=0}\|_{L^2_h(\mathcal{W})}^2+\int_{0}^{T}\int_{\mathcal{W}}|\mathcal{B}_{h}y_{0}+\partial_{t}g|^{2}\right)\\
&+Ce^{\frac{-C''}{h}}\left(\|y|_{t=0}\|_{L^2_h(\mathcal{W})}^2+\int_{0}^{T}\int_{\mathcal{W}}|g|^{2}\right).
\end{aligned}
\end{equation*}

Finally, by using assumption \eqref{assump:g}, regarding the definition of $\mathcal{B}_{h}y_{0}$, and increasing $\tau$ if necessary, it follows that

\begin{equation}\label{ine:g:5}
\begin{aligned}
    \tau\int_{\mathcal{W}}\left|g\left( \frac{T}{2},x\right)\right|^{2}\leq &C \tau e^{\frac{C''\tau}{T^{2}}}\left\| y_{0}\right\|^{2}_{H^{2}_{h}(\mathcal{W})}+Ce^{\frac{C''\tau}{T^2}}\int_{Q_{\omega}}s^{3}e^{2s\varphi}|z|^{2}+Ce^{\frac{C''\tau}{T^2}}\int_{Q_{ \omega}}s^{4}|y|^{2}e^{2s\varphi}\\
&+Ce^{-\frac{C''}{h}}\left(\|y|_{t=0}\|_{L^2_h(\mathcal{W})}^2+\|z|_{t=0}\|_{L^2_h(\mathcal{W})}^2\right).
\end{aligned}
\end{equation}
Notice that condition \eqref{assump:g} and mean value Theorem imply that there exists a constant $C'>0$ such that $|g(t,x)|\leq C'\left| g(T/2,x)\right|$ for all $(t,x)\in Q$. Substituting this last inequality in \eqref{ine:g:5} the proof is concluded.
\end{proof}

From the proof of Theorem \ref{theo:stability}, we observe that if the coefficients $\gamma_i, b_i$ and $c$ are independent of time, the operator $\mathcal{B}_h=0$. Thus, we have the following.
\begin{corollary}\label{theo:stability_idependentTime}
Let $\gamma_i, b_i$, $i=1,\ldots,d$, and $c$ be independent of time, $\psi$ that satisfies \eqref{assumtion:psi} and $\varphi$ according to \eqref{funcion-peso-2}. Let $g$ satisfy \eqref{assump:g} and let $y$ be the solution of the system \eqref{system:discrete}. Then, there exist positive constants $C$, $C''$,  $s_{0}\geq 1$, $h_{0}>0$, $\varepsilon >0$, depending on $\omega$, $\omega_{0}$, $\mbox{reg}^{0}$, $T$, such that for any $\tau\geq \tau_{0}(T+T^{2})$, $0<h\leq h_{0}$, there exists $0<\delta(h)\leq1/2$, with $\tau h(\delta T^{2})^{-1}\leq \varepsilon$, and the estimate
\begin{equation*}
    \begin{aligned}
    \|g\|_{L_h^2(\mathcal{W})}\leq &C\left(e^{\frac{C''}{T^2}\tau}\|y|_{t=\vartheta}\|_{H_h^2(\mathcal{W})}+e^{\frac{C''}{T^2}\tau}\|e^{s\alpha}\partial_{t}y\|_{L_h^2(Q_\omega)}
    +e^{-\frac{C''}{h}}\|\partial_{t}y|_{t=0}\|_{L^2_h(\mathcal{W})}\right),
    \end{aligned}
\end{equation*}
holds for $y\in\mathcal{C}^{1}([0,T],\overline{\mathcal{W}})$ and $Q_{\omega}:= (0,T)\times\omega$.
\end{corollary}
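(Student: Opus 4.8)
The plan is to specialize the argument already carried out for Theorem \ref{theo:stability}, exploiting the single structural simplification produced by the time-independence hypothesis. The decisive observation is that the operator
\[
\mathcal{B}_{h}y=\sum_{i\in\llbracket 1,d\rrbracket}D_{i}(\partial_{t}\gamma_{i}\,D_{i}y)-\partial_{t}b\,D_{i}A_{i}y-\partial_{t}c\,y
\]
vanishes identically, since every one of its terms carries a factor $\partial_t\gamma_i$, $\partial_t b$, or $\partial_t c$. Consequently the function $z=\partial_t y$ solves \eqref{system:z} with right-hand side reduced to $\partial_t g$ alone, while the datum $z(T/2)=\mathcal{C}_h y_0+g(T/2)$ is unchanged, because $\mathcal{C}_h$ involves only spatial difference operators frozen at $t=T/2$.

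First I would apply Corollary \ref{ine:Carleman:semi-discreteNew2} with $p=0$ to $z$ at $t=T/2$, exactly as in \eqref{ine:2}, but now with the contribution $\int_Q e^{2\tau\theta\varphi}|\mathcal{B}_h y|^2$ absent. This is the crux of the matter: since $\mathcal{B}_h y$ no longer needs to be controlled, there is no reason to invoke the $p=1$ Carleman estimate for $y$. As a result the observation term $\int_{Q_\omega}s^4|y|^2e^{2s\varphi}$ never enters the chain of inequalities, which accounts for the disappearance of $\|e^{s\varphi}y\|_{L_h^2(Q_\omega)}$ from the conclusion; likewise the error contribution $h^{-2}\tau\theta(0)(|y(0)|^2+|y(T)|^2)e^{2\tau\theta(0)\varphi}$ produced by that estimate is gone.

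Next I would bound the source term $\int_Q|\partial_t g|^2e^{2\tau\theta\varphi}$ by $C\int_Q|g(T/2,x)|^2e^{2\tau\theta\varphi}$ via hypothesis \eqref{assump:g}, and then apply Lemma \ref{Appendix-Lema1} with $p=0$ to transfer it to $C\tau^{-1/2}\int_{\mathcal{W}}|g(T/2,x)|^2e^{2\tau\theta(T/2)\varphi}$. Using $z(T/2)=\mathcal{C}_h y_0+g(T/2)$ together with \eqref{ine:almost} isolates $s(T/2)\|g(T/2)e^{\tau\theta(T/2)\varphi}\|_{L_h^2(\mathcal{W})}^2$ on the left, which is absorbed from the right for $\tau$ large, the right-hand coefficient $C\tau^{-1/2}$ being of lower order than the left-hand factor $s(T/2)\sim\tau$; the leftover term $s(T/2)\int_{\mathcal{W}}|\mathcal{D}y_0|^2e^{2\tau\theta(T/2)\varphi}$ is controlled by $\|y_0\|_{H_h^2(\mathcal{W})}^2$. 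The exponential bounds \eqref{eq:exp:bound} and \eqref{eq:exp:bound:2} then yield the analogue of \eqref{ine:g:2} carrying only the error $h^{-2}e^{-C''\tau/(\delta T^2)}(\|z(0)\|^2+\|z(T)\|^2)$.

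Finally I would apply the energy estimate of Lemma \ref{lem:energy:estimate} to the $z$-system, whose forcing is now simply $\partial_t g$: this controls $\|z(T)\|_{L_h^2(\mathcal{W})}^2$ by $\|z(0)\|_{L_h^2(\mathcal{W})}^2+\int_0^T\int_{\mathcal{W}}|\partial_t g|^2$, and \eqref{assump:g} bounds the last integral by $C\int_{\mathcal{W}}|g(T/2,x)|^2$, which, multiplied by the tiny factor $h^{-2}e^{-C''\tau/(\delta T^2)}$, is reabsorbed on the left for $\tau$ large. Choosing $\delta$ as a function of $h$ through $\tau_1/(T^2\delta)=\varepsilon_0/h$ collapses the surviving exponential into $e^{-C''/h}$, and dividing by $\tau$ and taking square roots produces the spatial $L_h^2$-estimate for $g(T/2,\cdot)$ with error term $Ce^{-C''/h}\|\partial_t y(0)\|_{L_h^2(\mathcal{W})}$. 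The only step demanding genuine care is the bookkeeping: one must verify that in the proof of Theorem \ref{theo:stability} every $y(0)$ and $y(T)$ error term entered solely through the $p=1$ estimate applied to $y$, so that deleting that estimate leaves only the $\partial_t y(0)$ error. This is indeed so, since the Carleman estimate applied directly to $z$ produces exclusively the boundary contributions $z(0)=\partial_t y(0)$ and $z(T)=\partial_t y(T)$.
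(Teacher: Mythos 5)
Your proposal is correct and follows essentially the same route the paper indicates: specialize the proof of Theorem \ref{theo:stability} by noting $\mathcal{B}_{h}\equiv 0$, skip the $p=1$ Carleman estimate for $y$ (which is precisely what removes the $\|e^{s\varphi}y\|_{L_h^2(Q_\omega)}$ observation and the $y(0),y(T)$ error terms), and eliminate $\|z(T)\|_{L^2_h(\mathcal{W})}$ via Lemma \ref{lem:energy:estimate} combined with \eqref{assump:g}. The only (harmless) deviation is that you invoke Lemma \ref{Appendix-Lema1} with $p=0$, whereas the paper observes this lemma is no longer needed here: since the surviving source term carries no extra factor of $s$, the crude bound $e^{2\tau\theta(t)\varphi}\leq e^{2\tau\theta(T/2)\varphi}$ (valid because $\theta$ attains its minimum at $T/2$ and $\varphi<0$) already suffices for the absorption, though your sharper $\tau^{-1/2}$ bound is equally valid.
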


The steps to prove Corollary \ref{theo:stability_idependentTime} are similar to those in the previous proof of Theorem \ref{theo:stability}. The main difference in the time-dependent case is the estimate for the operator ${\mathcal{B}}_{h}$, since it does not involve a second-order operator of $y$. In that sense, the proof of Corollary \ref{theo:stability_idependentTime} requires only the case $q=0$ from Theorem \ref{theo:Carleman}, and it is not necessary to use Lemma \ref{Appendix-Lema1}.

\subsection{Stability for the coefficient inverse problem}
An inverse problem related to the one described above is that when the source term has the form $g(t,x)=f(x)R(t,x)$. In this case, the aim is to estimate $f$ from the observations of $y$, the solution of
\begin{equation}\label{system:coefficient}
\begin{cases}
    \partial_{t}y(t,x)-\mathcal{A}_{h}y(t,x)=f(x)R(t,x),\quad &(t,x)\in Q,\\
    y(t,x)=0,& (t,x)\in (0,T)\times \partial\mathcal{W},\\
    y(0,x)=y_{ini}(x), &x\in\mathcal{W}.
\end{cases}
\end{equation}
 Indeed, assuming $R\in C^1([0,T];\mathcal{W})$ and that there exists a positive constant $\alpha>0$ such that  $$|R(\vartheta,x)|\geq\alpha,\quad\forall x\in\mathcal{W},$$
we have that $g(t,x):= f(x)R(t,x)$, for $f\in L^\infty_{h}({\mathcal{W}})$, verifies condition \eqref{assump:g}. Thus, by applying Theorem \ref{theo:stability} we have
\begin{equation}%\label{stability:01}
        \begin{aligned}
		    \|f\|_{L_h^2(\mathcal{W})}\leq &Ce^{\frac{C''}{T^2}\tau}\left(\|y|_{t=\vartheta}\|_{H_h^2(\mathcal{W})}+\|e^{s\alpha}\partial_{t}y\|_{L_h^2(Q_\omega)}+\|e^{s\alpha}y\|_{L_h^2(Q_\omega)}\right)\\
            &+Ce^{-\frac{C''}{h}}\left(\|y|_{t=0}\|_{L^2_h(\mathcal{W})}+\|\partial_{t}y|_{t=0}\|_{L^2_h(\mathcal{W})}\right).
            \end{aligned}
		\end{equation} 
When the operator $\mathcal{A}_h$ is independent of time, using Corollary \ref{theo:stability_idependentTime} we obtain the following
        \begin{equation}\label{stability:02}
        \begin{aligned}
		    \|f\|_{L_h^2(\mathcal{W})}\leq &C\left(e^{\frac{C''}{T^2}\tau}\|y|_{t=\vartheta}\|_{H_h^2(\mathcal{W})}+e^{\frac{C''}{T^2}\tau}\|e^{s\alpha}\partial_{t}y\|_{L_h^2(Q_\omega)}+e^{-\frac{C''}{h}}\|\partial_{t}y|_{t=0}\|_{L^2_h(\mathcal{W})}\right).
            \end{aligned}
		\end{equation} 

\section{Stability and reconstruction of a coefficient inverse problem}\label{sec:stability:reconstruction}

In this section, we are interested in the determination of a time-independent coefficient of zero-order, $p$, in \eqref{system:coefficient:02}. We consider the case in which the coefficients $\gamma_i, b_i$, $i=1,\ldots,d$, and $c$ are independent of time. We first establish the stability result for the coefficient inverse problem, which is a consequence of Corollary \ref{theo:stability_idependentTime}.
 \begin{theorem}[Stability for the coefficient inverse problem]\label{theorem:convergence}
 Let $\psi$ satisfy \eqref{assumtion:psi} and $\varphi$ be given by \eqref{funcion-peso-2}. Assume that $\gamma_i, b_i$, $i=1,\ldots,d$, and $c$ are independent of time. Let us consider $y_{p_1}$ and $y_{p_2}$ solutions of \eqref{system:coefficient:02} associated to $p_1,p_2\in \mathcal{X}_{m}$, respectively, such that there exists $\alpha>0$ with $|y_{p_1}(T/2,\cdot)|>\alpha$. Then, there exist positive constants $C$, $C''$,  $\tau_{0}\geq 1$, $h_{0}>0$, $\varepsilon >0$, depending on $\omega$, $\omega_{0}$, $T$, such that for any $p_2\in \mathcal{X}_{m}$,
\begin{equation}\label{stability:03}
    \begin{aligned}
		\|p_1-p_2\|_{L_h^2(\mathcal{W})}\leq &C
        \||\Lambda_{p_1}-\Lambda_{p_2}\||,
    \end{aligned}
\end{equation} 
where 
        $$
        \begin{aligned}
            \||\Lambda_{p_1}-\Lambda_{p_2}\||:= &
            e^{\frac{C''}{T^2}\tau}\|y_{p_1}|_{t=T/2}-y_{p_2}|_{t=T/2}\|_{H_h^2(\mathcal{W})}\\
            &+e^{\frac{C''}{T^2}\tau}\|e^{s\alpha}(\partial_{t}y_{p_1}-\partial_{t}y_{p_2})\|_{L_h^2(Q_\omega)}\\
            &+e^{-\frac{C''}{h}}\|\partial_{t}y_{p_1}|_{t=0}-\partial_{t}y_{p_2}|_{t=0}\|_{L^2_h(\mathcal{W})}.
        \end{aligned}$$
 \end{theorem}
\begin{proof}
     Define $z=y_{p_1}-y_{p_2}$. Then $z$ satisfies the system
    \begin{equation*}
        \begin{cases}
            \partial_{t}z(t,x)-\mathcal{A}_{h}z(t,x)+p_2(x)z(t,x)=(p_2(x)-p_1(x))y_{p_1}(t,x),\quad &(t,x)\in Q,\\
            z(0,x)=0, &x\in\mathcal{W},\\
            z(t,x)=0, &(t,x)\in(0,T)\times\partial\mathcal{W}.
        \end{cases}
    \end{equation*}
    Setting $g(t,x)=(p_2(x)-p_1(x))y_{p_1}(t,x)$, we observe that $$|\partial_t g(t,x)|\leq |p_2(x)-p_1(x)| |\partial_t y_{p_1}(t,x)|\frac{|y_{p_1}(T/2,x)|}{\alpha}\leq \frac{C}{\alpha}|g(T/2,x)|.$$
    
    Applying Corollary \ref{theo:stability_idependentTime} to $z$, there exist positive constants $C$, $C''$,  $s_{0}\geq 1$, $h_{0}>0$, $\varepsilon >0$, depending on $\omega$, $\omega_{0}$, $\mbox{reg}^{0}$, $T$, such that for any $\tau\geq \tau_{0}(T+T^{2})$, $0<h\leq h_{0}$, there exists $0<\delta(h)\leq 1/2$, with $\tau h(\delta T^{2})^{-1}\leq \varepsilon$, and verifying
\begin{equation*}
    \| (p_2-p_1)y_{p_1}\|_{L_h^2(\mathcal{W})}\leq C\left(e^{\frac{C''}{T^2}\tau}\|z|_{t=T/2}\|_{H_h^2(\mathcal{W})}+e^{\frac{C''}{T^2}\tau}\|e^{s\alpha}\partial_{t}z\|_{L_h^2(Q_\omega)}
    +e^{-\frac{C''}{h}}\|\partial_{t}z|_{t=0}\|_{L^2_h(\mathcal{W})}\right).
\end{equation*} 
We conclude with $\alpha \| (p_2-p_1)\|_{L_h^2(\mathcal{W})}\leq \| (p_2-p_1)y_{p_1}\|_{L_h^2(\mathcal{W})}$.
\end{proof}
\begin{remark}
The result of Theorem \ref{stability:03} remains valid even if the initial conditions associated to $y_{p_1}$ and $y_{p_2}$ are different. This follows from the facts that, when $\mathcal{A}_h$ is time-independent, the initial conditions do not appear in the stability result presented in Corollary \ref{theo:stability_idependentTime}. Similarly, in the continuous case, the initial condition does not appear in the corresponding stability inequality (see, for instance, equation (2.3) in \cite{Yamamoto_2001}).
\end{remark}
We now present the Carleman inequality required for the reconstruction algorithm \eqref{Alporithmo_Reconstruccion}.
\begin{theorem}[Carleman estimate]\label{theo:Carleman:02}
Assume that $\psi$ satisfies \eqref{assumtion:psi} and $\varphi$ is given by \eqref{funcion-peso-2}. For $\lambda\geq 1$ sufficiently large, there exist $C$, $\tau_{0}\geq 1$, $h_{0}>0$, $\varepsilon >0$, depending on $\omega$, $\omega_{0}$, $T$, $m$, $\lambda$, such that    \begin{align}\label{ine:carleman:semi-discreteNew:02}
		\int_{\mathcal{W}}\tau e^{2\tau\theta(T/2)\varphi}\Big|u|_{t=T/2}\Big|^2+I_{0}(u)+J_{0}(u) 
		\leq&  C\left(\int_Q\rho_a|L_p(u)|^2+\int_{Q_\omega}\rho_b|u|^2 +\int_{\mathcal{W}}\rho_c\Big|u|_{t=0}\Big|^2\right),
	\end{align}
where the weight functions are given by 
\begin{align*}           \rho_a(t,x):= &e^{2\tau\theta(t)\varphi(x)}+h^{-2}e^{-2\theta(0)\tau\inf|\varphi|},\\
\rho_b(t,x):= &(\tau\theta(t))^3e^{2\tau\theta(t)\varphi(x)},\\ 
\rho_c(t,x):= &(h^{-2}+\tau\theta(0))e^{2\tau\theta(0)\varphi(x)}+e^{-2\tau\theta(0)\inf|\varphi|},
        \end{align*}
and $$L_p(u):= \partial_tu-\mathcal{A}_h u+pu,$$
for all $p\in \mathcal{X}_{m}$, $\tau\geq \tau_{0}(T+T^{2})$, $0<h\leq h_{0}$, $0<\delta \leq 1/2$, $\tau h(\delta T^{2})^{-1}\leq \varepsilon$, and $u\in \mathcal{C}^{1}([0,T],\overline{\mathcal{W}})$.
\end{theorem}

\begin{proof}
    The result follows from Theorem \ref{theo:Carleman} and Lemmas \ref{lem:T/2} and \ref{lem:energy:estimate}. Indeed, using Lemma \ref{lem:T/2} with $q=0$, there exists a positive constant $C>0$ such that 
    \begin{equation*}
        \int_{\mathcal{W}}\tau \theta e^{2\tau\theta(T/2)\varphi}\Big|u|_{t=T/2}\Big|^2+I_{0}(u)+J_{0}(u) \leq C \left( I_{0}(u)+J_{0}(u) \right)+\int_{\mathcal{W}}\tau \theta(0) e^{2\tau\theta(0)\varphi}\Big|u|_{t=0}\Big|^2,
    \end{equation*}
    and using the Carleman inequality from Theorem \ref{theo:Carleman}, we obtain
    \begin{align}\label{eq:Teo:01}
		\int_{\mathcal{W}}\tau \theta e^{2\tau\theta(T/2)\varphi}\Big|u|_{t=T/2}\Big|^2+I_{0}(u)&+J_{0}(u)\notag \\&
		\leq  C\left(\int_Qe^{2\tau\theta\varphi}|L_p(u)|^2+\int_{(0,T)\times\omega}(\tau\theta)^{3}e^{2\tau\theta\varphi}|u|^2\right)\\
		&+Ch^{-2}\int_{\mathcal{W}} \left(\Big|u|_{t=0}\Big|^2+\Big|u|_{t=T}\Big|^2\right)e^{2\tau\theta(0)\varphi}\notag\\
        &+\int_{\mathcal{W}}\tau \theta(0) e^{2\tau\theta(0)\varphi}\Big|u|_{t=0}\Big|^2.\notag
	\end{align}
    Finally, using Lemma \ref{lem:energy:estimate}, we have 
    \begin{align*}
        \int_\mathcal{W} \Big|u|_{t=T}\Big|^2
        \leq C\left(\int_\mathcal{W} \Big|u|_{t=0}\Big|^2+\int_{Q} |L_p(u)|^2\right),
    \end{align*}
    and replacing in \eqref{eq:Teo:01}, and using the definition of $\rho_a,$ $\rho_b$ and $\rho_c$, we conclude the proof.
\end{proof} 
Before going further, for any $p\in\mathcal{X}_{m}$, we introduce the space of the trajectories, 
 $$\mathcal{V}_p:= \{ z\in L^2(0,T;H_h^1(\mathcal{W})):\; L_p(z)\in L^2((0,T)\times\mathcal{W}), \; z|_{\partial\mathcal{W}}=0,\;\textrm{and } z|_{t=0}\in L_{h}^2(\mathcal{W})\},$$
endowed with the norm 
$$\|z\|_{\mathcal{V}_p,\tau}:= \left(\int_Q\rho_a|L_p(z)|^2+\int_{Q_\omega}\rho_b|z|^2 +\int_{\mathcal{W}}\rho_c\Big|z|_{t=0}\Big|^2\right)^{1/2},$$
which thanks to Theorem \ref{theo:Carleman:02} is a norm in $\mathcal{V}_p$, for $\tau$ sufficiently large .

 Now, for any $\mu\in L_{h}^2(Q_\omega)$ and $\nu\in L_{h}^2(\mathcal{W})$, we introduce the functional $\mathcal{J}_{\tau,p}[\mu,\nu]:\mathcal{V}_p\to \mathbb{R}$, given by
\begin{equation}\label{FunctionalJ}
            \mathcal{J}_{\tau,p}[\mu,\nu](u):= \frac{1}{2}\int_Q\rho_a|L_p(u)|^2+\frac{1}{2}\int_{Q_\omega}\rho_b|u-\mu|^2 +\frac{1}{2}\int_{\mathcal{W}}\rho_c\Big|u|_{t=0}-\nu\Big|^2.
\end{equation}
       
 \begin{theorem}
    Assume that \eqref{assumtion:psi} and \eqref{funcion-peso-2} hold, and that $\mu\in L_{h}^2(Q_\omega)$ and $\nu\in L_{h}^2(\mathcal{W})$. Then, for $\lambda\geq 1$ sufficiently large, there exist $\tau_{0}\geq 1$, $h_{0}>0$, $\varepsilon >0$, depending on $\omega$, $\omega_{0}$, $T$, $m$, $\lambda$, such that the functional $\mathcal{J}_{\tau,p}[\mu,\nu]$ defined in \eqref{FunctionalJ} is continuous, strictly convex and coercive on $(\mathcal{V}_p,\|\cdot\|_{\mathcal{V}_p,\tau})$; hence it admits a unique minimizer $u^\ast_p$ in $\mathcal{V}_p$, for any $p\in \mathcal{X}_{m}$, $\tau\geq \tau_{0}(T+T^{2})$, $0<h\leq h_{0}$, $0<\delta \leq 1/2$, $\tau h(\delta T^{2})^{-1}\leq \varepsilon$.
    
Moreover, for any data $\mu\in L_{h}^{2}(Q_\omega)$ and $\nu\in L_{h}^{2}(\mathcal{W})$, the minimizer $u^\ast_p$ of $\mathcal{J}_{\tau,p}[\mu,\nu]$ satisfies:
$$\|u^\ast_p\|_{\mathcal{V}_p,\tau}\leq 2 \left(\int_{Q_\omega}\rho_b|\mu|^2 +\int_{\mathcal{W}}\rho_c|\nu|^2\right)^{1/2}.$$
 \end{theorem}
 \begin{proof}
     The proof follows from the  decomposition 
     \begin{equation}\label{eq:teo:02}
         \mathcal{J}_{\tau,p}[\mu,\nu](u)=\mathcal{J}_{\tau,p}[0,0](u)+\mathcal{J}_{\tau,p}[\mu,\nu](0)-\int_{Q_\omega}\rho_b\,\mu\,u -\int_{\mathcal{W}}\rho_c\,\nu\,u|_{t=0},
     \end{equation}
     where we observe that $\mathcal{J}_{\tau,p}[0,0](u)=\frac{1}{2}\|u\|^2_{\mathcal{V}_p,\tau}$. Thus,  $\mathcal{J}_{\tau,p}[\mu,\nu](u)$ is the sum of a strictly convex function (the norm of $\mathcal{V}_p$) and a linear continuous operator. Namely, $\mathcal{J}_{\tau,p}[\mu,\nu]$ defined in \eqref{FunctionalJ} is continuous, strictly convex and coercive on $(\mathcal{V}_p,\|\cdot\|_{\mathcal{V}_p,\tau})$. Therefore $\mathcal{J}_{\tau,p}[\mu,\nu]$ admits a unique minimizer $u^\ast_p$ in $\mathcal{V}_p$, for any $p\in \mathcal{X}_{m}$, $\tau\geq \tau_{0}(T+T^{2})$, $0<h\leq h_{0}$, $0<\delta \leq 1/2$, $\tau h(\delta T^{2})^{-1}\leq \varepsilon$.
     
     Finally, denoting by $u^\ast_{p}$ the minimizer we have 
     $$\mathcal{J}_{\tau,p}[\mu,\nu](u)\leq \mathcal{J}_{\tau,p}[\mu,\nu](0),$$
     and using \eqref{eq:teo:02}, we obtain 
     $$\frac{1}{2}\|u^\ast_p\|^2_{\mathcal{V}_p,\tau}+\mathcal{J}_{\tau,p}[\mu,\nu](0)-\int_{Q_\omega}\rho_b\,\mu\,u^\ast_p -\int_{\mathcal{W}}\rho_c\,\nu\,u^\ast_p|_{t=0}\leq \mathcal{J}_{\tau,p}[\mu,\nu](0).$$
     Using the inequality $2ab\leq 2a^2+\frac{b^2}{2}$, the proof follows.
 \end{proof}

\begin{proof}[Proof of Theorem \ref{theo:convergence}(Convergence of Algorithm \eqref{Alporithmo_Reconstruccion})]
    Let $p^\ast\in \mathcal{X}_{m}$ such that there exists $\alpha>0$ with
$$|y_{p^{\ast}}(T/2,\cdot)|>\alpha,$$ and $y_{p^\ast}$ is the solution of \eqref{system:coefficient:02} with $p=p^\ast$. We assume the data 
$$\Lambda_{p^\ast}:= (y_{p^\ast}|_{t=T/2},\partial_ty_{p^\ast}|_{Q_{\omega}},\partial_ty_{p^\ast}|_{\{t=0\}\times\mathcal{W}})$$
are known.

Given $p_k\in \mathcal{X}_{m}$ we consider $y_{p_k}$, the solution of \eqref{system:coefficient:02}, with $p=p_k$. Let $\mu_k:= \partial_ty_{p_k}-\partial_ty_{p*}$ on $Q_\omega$ and $\eta_k:= \partial_ty_{p_k}|_{t=0}-\partial_ty_{p*}|_{t=0}$ on $\mathcal{W}$.

If we define $z:= \partial_t(y_{p_k}-y_{p^\ast})$ then 
$$\mu_k=z|_{Q_{\omega}},\quad \nu_k=z|_{\{t=0\}\times \mathcal{W}}.$$
Thus, by considering the Euler-Lagrange equation for $\mathcal{J}_{\tau,p_k}[\mu_k,\nu_k]$ at $u^\ast_{p_k}$, we obtain  

$$\int_Q\rho_a L_{p_k}(u^\ast_{p_k})L_{p_k}(v)+\int_{Q_\omega}\rho_b(u^\ast_{p_k}-z)v +\int_{\{t=0\}\times\mathcal{W}}\rho_c(u^\ast_{p_k}-z)v=0,\quad \forall v\in \mathcal{V}_{p_k}.
$$
By taking $v:= u^\ast_{p_k}-z,$ and using the linearity of $L_{p_k}$, we have $$L_{p_k}(u^\ast_{p_k})=L_{p_k}(v)+L_{p_k}(z).$$
After replacing and using the Young inequality, we obtain 
$$\int_Q\rho_a |L_{p_k}(v)|^2+\int_{Q_\omega}\rho_b|v|^2 +\int_{\{t=0\}\times\mathcal{W}}\rho_c|v|^2
\leq \frac{1}{2}\int_Q\rho_a |L_{p_k}(z)|^2+\frac{1}{2}\int_Q\rho_a |L_{p_k}(v)|^2.
$$
Thus, we obtain 
$$\int_Q\rho_a |L_{p_k}(v)|^2+\int_{Q_\omega}\rho_b|v|^2 +\int_{\{t=0\}\times\mathcal{W}}\rho_c|v|^2
\leq \int_Q\rho_a |L_{p_k}(z)|^2.
$$

Finally, by applying Theorem \ref{theo:Carleman:02} to $v$, we obtain that there exist $C>0$, $\tau_{0}\geq 1$, $h_{0}>0$, $\varepsilon >0$, depending on $\omega$, $\omega_{0}$, $T$, $m$,  $\lambda$, such that for $p_k\in \mathcal{X}_{m}$, $\tau\geq \tau_{0}(T+T^{2})$, $0<h\leq h_{0}$, $0<\delta \leq 1/2$, $\tau h(\delta T^{2})^{-1}\leq \varepsilon$, it holds
\begin{align*}
		\int_{\mathcal{W}}\tau e^{2\tau\theta(T/2)\varphi}\Big|v|_{t=T/2}\Big|^2+I_{0}(v)+J_{0}(v) 
		\leq&  C\int_Q\rho_a |L_{p_k}(z)|^2.
	\end{align*}
That is    \begin{align}\label{eq:theo:final:02}
		\int_{\mathcal{W}}\tau e^{2\tau\theta(T/2)\varphi}\Big|v|_{t=T/2}\Big|^2
		\leq&  C\int_Q\rho_a |L_{p_k}(z)|^2.
	\end{align}

On the other hand, we observe that 
\begin{align*}
    v|_{t=T/2}=&u^\ast_{p_k}|_{t=T/2}-\partial_t(y_{p_k}-y_{p^\ast})|_{t=T/2}\\
    =& u^\ast_{p_k}|_{t=T/2}-\mathcal{A}_h(y_{p_k}-y_{p^\ast})|_{t=T/2}+p_ky_{p_k}|_{t=T/2}-p^\ast y_{p^\ast}|_{t=T/2}\\
    =&(\tilde{p}_{k+1}-p^\ast)y_{p^\ast}|_{t=T/2}.
\end{align*}
Then, substituting the above equality and $L_{p_k}(z)=(p^\ast-p_k)\partial_t y_{p^\ast}$, into the left-hand and right-hand sides of \eqref{eq:theo:final:02}, respectively, we obtain
\begin{align}
		\int_{\mathcal{W}}\tau e^{2\tau\theta(T/2)\varphi}\Big|(\tilde{p}_{k+1}-p^\ast)y_{p^\ast}|_{t=T/2}\Big|^2
		\leq&  C\int_Q\rho_a |(p^\ast-p_k)\partial_t y_{p^\ast}|^2.
	\end{align}
Thus, using the lower and upper bounds for $y_{p^{\ast}}$ and $\partial_{t}y_{p^{\ast}}$ respectively, we have 
\begin{equation}\label{eq:proof:teofinal:01}
    \alpha^2 \int_{\mathcal{W}}\tau e^{2\tau\theta(T/2)\varphi}\Big|\tilde{p}_{k+1}-p^\ast\Big|^2
		\leq  C\int_Q\rho_a |p_k-p^\ast|^2.
\end{equation}

Finally, we need to compare $\rho_a$ with $e^{2\tau\theta(T/2)\varphi(x)}$. By using the Lemma \ref{Appendix-Lema1} with $q=0$ and $\theta(0)\leq (\delta T^2)^{-1}$, it follows that 
$$\int_Q\rho_a |p_k-p^\ast|^2
\leq C\left(\tau^{-\frac{1}{2}}\int_{\mathcal{W}}e^{2\tau\theta(T/2)\varphi} |p_k-p^\ast|^2+h^{-2}e^{\frac{-C'\tau}{\delta}}\int_{\mathcal{W}} |p_k-p^\ast|^2\right).$$
Now, we observe 
$$h^{-2}e^{\frac{-C'\tau}{\delta}}
\leq h^{-2}e^{-\frac{C'\tau}{T^2\delta}(1+\delta)}e^{2\theta(T/2)\tau\varphi(x)}.$$
By repeating the argument from the proof of Theorem \ref{theo:stability}, where the parameter was used $\delta$ to control $h^{-2}$  (i.e., we chose $\tau_1>0$ such that $\tau\geq \tau_1$), we have $e^{-\frac{C''\tau}{\delta T^{2}}}\leq e^{-\frac{C''\tau_1}{\delta T^{2}}}$. Taking $\delta$ sufficiently small in such a way that $\frac{\tau_1}{T^{2}\delta}=\frac{\varepsilon_{0}}{h}$, we obtain
$$\int_Q\rho_a |p_k-p^\ast|^2
\leq C(1+e^{-\frac{c''}{h}})\tau^{-\frac{1}{2}}\int_{\mathcal{W}}e^{2\tau\theta(T/2)\varphi} |p_k-p^\ast|^2,$$
and combining this with \eqref{eq:proof:teofinal:01}, yields
$$\int_Q \tau e^{2\tau\theta(T/2)\varphi} |\tilde{p}_{k+1}-p^\ast|^2
\leq C(1+e^{-\frac{c''}{h}})\tau^{-\frac{1}{2}}\int_{\mathcal{W}}e^{2\tau\theta(T/2)\varphi} |p_k-p^\ast|^2.$$
Since $T_m$ is Lipschitz and $T_m(p^\ast)=p^\ast$, we have $$|p_{k+1}-p^\ast|=|T_m(\tilde{p}_{k+1})-T_m(p^\ast)|\leq |\tilde{p}_{k+1}-p^\ast|.$$
Hence, the proof follows, which concludes the convergence of the algorithm for $\tau$ sufficiently large.
\end{proof}

\begin{remark}
    At the end of the proof of Theorem \ref{theo:convergence}, we chose $\delta=\delta(h)$ to absorb the term $h^{-2}e^{-2\theta(0)\tau\inf|\varphi|}$. By using similar arguments it is possible to prove that  $\rho_c(t,x)\leq Ce^{-\frac{c}{h}}$ holds. 
    Therefore, we observe that $\rho_c$, and consequently the term $u|_{t=0}$ represents an error that arises from the discretization procedure.
\end{remark}

\begin{remark}
    It is possible to consider the functional \begin{align}\label{FunctionalJ_alternativo}
             \tilde{\mathcal{J}}_{\tau,p}[\mu,\nu](u)=&\frac{1}{2}\int_Qe^{2\tau\theta\varphi}|L_p(u)|^2+\frac{1}{2}\int_{Q_\omega}(\tau\theta)^3e^{2\tau\theta\varphi}|u-\mu|^2+\\
             &
             e^{-\tau\theta(0)\inf|\varphi|}\left(\frac{1}{2}\int_Qe^{2\tau\theta(T/2)\varphi}|L_p(u)|^2+\frac{1}
             {2}\int_{\mathcal{W}}\Big|u|_{t=0}-\nu\Big|^2\right),\notag  
\end{align}
as an alternative to $\mathcal{J}_{\tau,p}$ in the Algorithm \ref{Alporithmo_Reconstruccion} since $\tilde{\mathcal{J}}_{\tau,p}$ is an upper bound of $\mathcal{J}_{\tau,p}$ when $\delta(h)$; and thus obtaining the same result as in Theorem \ref{theo:convergence}. The advantage of using \eqref{FunctionalJ_alternativo} instead is that it does not contain singular terms $o(h^{-2})$. Moreover, we can explicitly see how $\delta(h)\to 0$ to control the error term 
\[ e^{-\tau\theta(0)\inf|\varphi|}\left(\frac{1}{2}\int_Qe^{2\tau\theta(T/2)\varphi}|L_p(u)|^2+\frac{1}
{2}\int_{\mathcal{W}}\Big|u|_{t=0}-\nu\Big|^2\right),\] since $\theta(0)= \frac{1}{T^2(1+\delta)\delta}$. 
Namely, $\delta(h)\in (0,\frac12)$ is such that $\delta(h)\to 0$ and $\frac{h}{\delta}\to 0,$ when $h$ goes to zero. That is, we need to take $\delta$ to zero more slowly than $h$. For instance, by taking $\delta(h)=h^{\sigma},$ with $\sigma\in (0,1)$, when $h$ is sufficiently small.         
\end{remark}

%%%%%%%%%%%%%%%%%%%
%%%%%%%%%%%%%%%%%%

\section{Concluding remarks and perspectives}\label{sec:concluding}

    In this work, we adapted the methodology from \cite{IY-1998} to the semi-discrete setting. This involved the development of a new Carleman estimate for the semi-discrete parabolic operator, as previous Carleman estimates for these operators did not include the second-order term on the left-hand side. This omission was due to their primary applications in controllability problems. Moreover, when the diffusive coefficient is time-independent, we established Lipschitz stability with respect to the measurements. It is known that an algorithm based on Carleman estimates with two parameters are difficult to implement. For this reason, several efforts are focused on the development of the Carleman estimate with a weight function with only one parameter \cite{KJZ:2020}. For instance, reconstruction algorithms are studied for the wave equation in \cite{BDBE:2013,BE:2013,BEO-2015}. 
    As an open problem, one could aim to adapt the presented algorithm to use a Carleman weight with only a single parameter. This might involve incorporating additional measurements to facilitate a numerical implementation of the reconstruction algorithm in that setting, following the developments from \cite{BEO-2015}.
    
    Regarding the results presented in \cite{IY-1998}, we observe that they also establish a stability result based on boundary measurements. To achieve a similar result in the semi-discrete setting, it is essential to develop a semi-discrete Carleman estimate with boundary observation. In this direction, to the best of our knowledge, only a few works address Carleman estimates with boundary data; see, for instance, \cite{LOPD:2023,ZY-2024} for the discrete Laplacian operator and \cite{CLNP2022} for a semi-discrete fourth-order parabolic operator. Therefore, as a first step toward incorporating boundary observation, one must derive a semi-discrete Carleman estimate for a semi-discrete parabolic operator with boundary data. Furthermore, motivated by \cite{CLNP2022,WZ:2024}, it would be interesting to explore inverse problems for higher-order operators using semi-discrete Carleman estimates.
    
    In \cite{BDLR:2007}, the results of controllability and inverse problems were obtained for parabolic operators with a discontinuous diffusion coefficient. A natural extension of our work would be to establish the stability of a coefficient inverse problem when the diffusive function is discontinuous. A promising approach could be to adapt the methodology from \cite{N:2014}, where a Carleman estimate was developed for a semi-discrete parabolic operator with discontinuous diffusive coefficient in the one-dimensional setting and applied to obtain controllability results. Hence, the first step is to extend this methodology to arbitrary dimensions and subsequently to adapt it to the study of inverse problems.
    
    Recently, the Lipschitz stability for the discrete inverse random source problem and the Hölder stability for the discrete Cauchy problem have been obtained in \cite{Wu_2024} in the one-dimensional setting. In turn, a Carleman estimate for the semi-discrete stochastic parabolic operator is obtained in arbitrary dimensions, implying a controllability result \cite{LPP:2024}. We note that the methodology developed here cannot be used in the stochastic case, although the discrete setting can be used to extend into arbitrary dimension the semi-discrete inverse problem studied in \cite{Wu_2024}. We refer to \cite{QL:YW:2024} and references therein for stochastic inverse problems in the continuous framework.
    
    The inverse problem of coefficient identification with time discretization is addressed in \cite{Klibanov2024_tiempo_discreto}. A natural extension of this work would be to consider the fully discrete problem in both space and time. Achieving this would require the development of a fully discrete version of the Carleman estimates, potentially by adapting the techniques presented in \cite{GCHS2021,LMZP2023}. Moreover, exploring the extension to systems of parabolic equations, as investigated in \cite{MFG_Klibavov_2023} with a boundary measurement, presents another compelling research direction. Finally, the study of numerical reconstruction schemes similar to those presented in \cite{CNumerico_Klibanov_2025} would also be a valuable contribution.
    
    There are works that address the reconstruction of $p$ from equation \eqref{system:coefficient:02}. For example, in \cite{YZ:2001}, the problem of simultaneously recovering the potential $p$ and the initial condition of the system is presented. Stability is established for both the parameter $p$ and the initial condition, which is similar to the stability we obtain in Theorem \ref{theorem:convergence}, but without the measurement at $t = 0$, and it is achieved regardless of whether the solutions have different initial conditions. This stability for $p$ is used in \cite{YZ:2001} to obtain logarithmic stability for the initial data, a matter that we do not address and which would remain as an open problem in the present semi-discrete framework. We believe that at least two difficulties of the technique must be addressed: the first is the existence of a semi-discrete version of ``the method of logarithmic convexity'' (see \cite[equation 2.20]{YZ:2001}), and the second is the difficulty that the stability for $p$ involves the initial data in the semi-discrete framework.
    
    Regarding the algorithm presented in this work, it does not involve a Tikhonov-type regularization, unlike the approaches in \cite{YZ:2001} and \cite{KJZ:2020}. However, it incorporates a Carleman weight within the functional to be minimized, which is similar to the convexification method presented in \cite{KJZ:2020}. Unfortunately, since this Carleman weight depends on two parameters, achieving a robust implementation of the optimization problem is particularly challenging. This difficulty arises from the extreme sensitivity of the double exponential inherent in the Carleman weight, a point also discussed in \cite{KJZ:2020} and \cite{BE:2013}.

    In conclusion, we have successfully adapted the stability results of the continuous case to the semi-discrete one, but to address the singular error terms $\mathcal{O}(h^{-2})$, which emerge from the semi-discrete Carleman inequality, we have had to establish restrictions on the parameter $\delta$. The parameter $\delta$ represents the regularization of the Carleman weight function at the singularity points $t=0$ and $t=T$. Consequently, to achieve stability results that resemble the continuous case, we require that the semi-discrete weight function converges to the continuous one as $\delta$ tends to zero, while simultaneously ensuring that $\delta$ continues to satisfy the constraints of the semi-discrete Carleman inequality. Additionally, we present a reconstruction algorithm for the inverse coefficient problem. This algorithm can be adapted to the continuous case and, to our knowledge, has not been previously presented in the literature.

%%%%%%%%%%%%%%%%
%%%%%%%%%%%
\section*{Acknowledgments}
R. Lecaros, J. L\'opez-R\'ios, and A. A. P\'erez have been partially supported by the Math-Amsud project	CIPIF 22-MATH-01. R. Lecaros was partially supported by FONDECYT (Chile) Grant 1221892. J. L\'opez-R\'ios acknowledges support by Vicerrectoría de Investigación y Extensión of Universidad Industrial de Santander.  A. A. P\'erez was supported by Vicerrector\'ia de Investigaci\'on y postgrado, Universidad del B\'io-B\'io, proyect IN2450902 and FONDECYT Grant 11250805.
%\section*{Statements and Declarations}
%The authors have no relevant financial or non-financial interests to disclose.

\bibliographystyle{abbrv}
\bibliography{references}
\end{document}